\documentclass[14pt ]{amsart}
\usepackage{amsmath, amstext, amsbsy, amssymb}
\usepackage{graphicx}
\usepackage{subfigure}
\usepackage{picinpar}
\usepackage{psfrag}
\usepackage[all]{xy}

\setlength{\parindent}{0pt}
\hoffset \voffset \oddsidemargin=55pt \evensidemargin=55pt
\topmargin=16pt \headheight=12pt \headsep=25pt
\numberwithin{equation}{section}
\def\beq{\begin{eqnarray}}
\def\eeq{\end{eqnarray}}
\def\beqs{\begin{eqnarray*}}
\def\eeqs{\end{eqnarray*}}
\def\mN{{\mathbb N}}
\def\mz{{\mathbb Z}}
\def\mc{{\mathbb C}}

\def\dim{{\hbox{\rm dim}}}

\newfont{\df}{eufm10}

\voffset-3em \hoffset-4em \textwidth=150mm \textheight=230mm
\parindent=2em
\parskip=0.5em

\def\mc{{\mathbb C}}

\def\sg{\frak{g}}
\def\sh{\frak{h}}
\def\ca{\mathcal A}
\def\cb{\mathcal B}
\def\mf{\mathbb F}
\setlength{\parindent}{0pt}

\title[]
{On the center of the quantized enveloping algebra of a simple Lie algebra}

\author[L. Li]{Libin Li}
\address{L. Li: College of Mathematical Science, Yangzhou University, Yangzhou, 225009, China}
\email{lbli@yzu.edu.cn}
\author[L. Xia]{Li-meng   Xia$^{\dag}$}
\address{L.Xia: Institute of Applied System Analysis, Jiangsu University, Zhenjiang, 212013, China}
\email{xialimeng@ujs.edu.cn}

\author[Y. Zhang]{Yinhuo Zhang}
\address{Y. Zhang: Department of Mathematics and Statistics, University of Hasselt, Universitaire Campus, 3590 Diepeenbeek, Belgium}
\email{yinhuo.zhang@uhasselt.be}

\thanks{$^\dag$The corresponding author.}
\thanks{This work was supported by National Natural Science Foundation of China (Grants No. 11471282, 11271131).}
\date{}
\subjclass[2000]{16N20, 19A22} \keywords{Center, Lie algebra, quantum group, Generators, generating relations}

\date{}
\begin{document}
\maketitle
%\centerline{Li-meng   Xia$^{\dag}$}
%\centerline{Department of Mathematics, Jiangsu University}
%\centerline{Zhenjiang, 212013, China}
%\centerline{\it xialimeng@ujs.edu.cn}
%\centerline{and}
%\centerline{Libin Li}
%\centerline{College of Mathematical Science, Yangzhou University}
%\centerline{Yangzhou, 225009, China}
%\centerline{\it lbli@yzu.edu.cn}

\begin{abstract}{ Let $\sg$ be a  finite dimensional simple complex Lie algebra and $U=U_q(\sg)$ the quantized enveloping algebra (in the sense of Jantzen) with $q$ being generic. In this paper, we show that the center $Z(U_q(\sg))$ of the quantum group $U_q(\sg)$ is isomorphic to a monoid algebra, and  that $Z(U_q(\sg))$ is a polynomial algebra if and only if $\sg$ is of  type $A_1, B_n, C_n, D_{2k+2}, E_7, E_8, F_4$  or $G_2.$  Moreover,  in case $\sg$ is of type $D_{n}$ with $n$ odd,  then $Z(U_q(\sg))$ is isomorphic to a quotient algebra of a polynomial algebra in $n+1$ variables  with one relation;  in case $\sg$ is of type $E_6$,  then $Z(U_q(\sg))$ is isomorphic to a quotient algebra of a polynomial algebra in fourteen variables with eight relations;  in case $\sg$ is of type $A_{n}$, then $Z(U_q(\sg))$ is isomorphic to a quotient algebra of a polynomial algebra described by $n$-sequences.}
\end{abstract}

\newtheorem{theo}{Theorem}[section]
\newtheorem{theorem}[theo]{Theorem}
\newtheorem{defi}[theo]{Definition}
\newtheorem{lemma}[theo]{Lemma}
\newtheorem{coro}[theo]{Corollary}
\newtheorem{prop}[theo]{Proposition}
\newtheorem{remark}[theo]{Remark}
\newtheorem{exam}[theo]{Example}

\section{Introduction}

Let $\sg$ be a  finite-dimensional simple complex Lie algebra with Cartan matrix $A=(a_{i,j})_{n\times n}$.  It is well known that the center of the universal enveloping algebra $U(\sg)$ is isomorphic to the invariant subalgebra  $S(\sh)^W$ of the Weyl group $W$  acting on  the symmetric algebra $S(\sh)$ of a Cartan subalgebra $\sh$ of  $\sg$.  The isomorphism is given by  the Harish-Chandra homomorphism. Consequently, the center of $U(\sg)$ is a polynomial algebra for any type of $\sg.$

In the quantized case,  if $q$ is generic, the structure of the center of the quantized enveloping algebra  $U_q(\sg)$ in the sense of  Drinfel$^\prime$d and Jimbo was described respectively by Drinfel$^\prime$d in \cite{Dr} and by Reshetikhin in \cite{R}. It was showed there that  the Harish-Chandra homomorphism, from the representation ring of  the tensor category of finite dimensional  modules of type one  over $U_q(\sg)$ to the center, is an isomorphism,  and can be constructed  from the canonical quasi-triangular structure of $U_q(\sg)$ (see also \cite{B} ). Like the non-quantized  case, the center of $U_q(\sg)$ is  a polynomial algebra for any type $\sg.$  Moreover,  the Drinfel$^\prime$d-Reshetikhin's construction can be generalized to any  symmetrizable Kac-Moody algebra $\sg$ as well as to any quantum affine algebra, see \cite{E}.

The situation turns to be more complicated when one considers the quantized enveloping algebra  $U_q(\sg)$ in Jantzen's sense with $q$ being generic. The Harish-Chandra homomorphism can be generalized to Jantzen's quantum group $U_q(\sg)$, and defines a homomorphism from the center $Z(U_q(\sg))$ to the Laurent polynomial algebra $U^0$  generated by the group-like elements in $U_q(\sg)$. Following the fact that the intersection of the annihilators of all finite-dimensional modules is trivial, one can  see that  the Harish-Chandra homomorphism is injective, and that the image of the  homomorphism is contained in ${(U^0)}^W$.  Now one would expect the equality here like in the classical case. However,  this  is not the case.  The image turns out to be $(U_{ev}^0)^W$,  the linear span of all $K_{\mu},$  where $\mu$ is  an even weight (see Section 2 for details). Yet, to the best of our knowledge, the minimal generating set for $(U_{ev}^0)^W$ has not been determined in general. However, if $\sg$ is of type $A_1$, then the center $Z(U_q(\sg))$ is generated by the quantum Casimir element, and $(U_{ev}^0)^W$ is isomorphic to the polynomial algebra $\mathbb{C}[X]$, where the quantum Casimir element $X$ stands for $EF+\frac {Kq^{-1}+K^{-1}q}{(q-q^{-1})^2},$ see \cite{J} for details. When $\sg$ is of types $E_8, F_4$ or $G_2$, the root lattice $Q$ coincides with the weight lattice $\Lambda$ of $\sg$. In this case, $Z(U_q(\sg))$ was proved to be a polynomial algebra (see \cite{Dr}, \cite{R}). This  leads us to the following question: for which type of a Lie algebra $\sg$,  is $(U_{ev}^0)^W$  a polynomial algebra? In general,  the invariant subalgebra  $(U_{ev}^0)^W$ is not necessarily  a polynomial algebra. In \cite{LWP, WWL}, we have proved that the center of $U_q(sl_{n})$  for $n=3, 4$ is isomorphic to a quotient algebra of polynomial algebra in $n$ variables and with one relation. Using representation theory, for any $\lambda \in Q\cap2\Lambda$, one may construct an element $z_{\lambda}\in Z(U_q(\sg))$, and obtain a basis for $Z(U_q(\sg)),$ see \cite{J}. It follows from Noether's theorem that $(U_{ev}^0)^W$ is a finitely generated algebra, see \cite{C,F, L} for details.

In this paper, we study the center of the quantized enveloping algebra  $U_q(\sg)$ in Jantzen's sense for any finite type of $\sg$. We describe explicitly the minimal generating set of $(U_{ev}^0)^W$ and classify the types of $\sg$  such that $(U_{ev}^0)^W$ is a polynomial algebra with variables $n$ (see Theorem 3.7 and Theorem 3.8, respectively), where $n$ is the rank of $\sg$. Explicitly, the center $Z(U_q(\sg))$ is isomorphic to a polynomial algebra if and only if $\sg$ is of type $A_1, B_n, C_n, D_{2k+2}, E_7, E_8, F_4$ or $G_2.$
Then we exhibit the detailed algebraic structure of the center $Z(U_q(\sg))$ for type  $D_n$ with $n$ odd (Theorem 4.1) and for  type $E_6$ (Theorem 4.2). Finally, in case $\sg$ is of type $A_n$ with $n\geq2$, we solve this problem by using the $n$-sequences.  The corresponding main results are presented in Theorem 4.4 and Theorem 4.7.

\section{Quantized enveloping algebra and even weight lattice}%{Preliminaries}

Throughout the paper,  $\sg$ is a finite-dimensional simple complex Lie algebra with Cartan matrix $A=(a_{ij})_{n\times n}$. Suppose that $\sh\subset\sg$ is a Cartan subalgebra and $\Pi=\{\alpha_i|1\leq i\leq n\}\subset\sh^*$ is a prime root system. Let $(,)$ be a non-degenerate invariant symmetric bilinear form on $\sg$. The restriction to $\sh$  is non-degenerate as well.  Thus,  there exists an induced non-degenerate symmetric bilinear form over $\sh^*$,  also denoted  $(,)$.  Moreover, the following holds:
\beqs a_{ij}&=&\frac{2(\alpha_i,\alpha_j)}{(\alpha_i,\alpha_i)},\quad \forall 1\leq i,j\leq n.\eeqs

We may assume that the Dynkin diagram of $\sg$ determined  by $A$ is one of the following:

\centerline{\begin{tabular}{ll}
$A_n:$&\xymatrix{\circ_1\ar@{-}[r]&\circ_2\ar@{-}[r]&\circ\cdots\cdots\circ_{n-1}\ar@{-}[r]&\circ_n}\\
$B_n:$&\xymatrix{\circ_1\ar@{-}[r]&\circ_2\ar@{-}[r]&\circ\cdots\cdots\circ\ar@{-}[r]&\circ_{n-1}\ar@{=>}[r]&\circ_n}\\
$C_n:$&\xymatrix{\circ_1\ar@{-}[r]&\circ_2\ar@{-}[r]&\circ\cdots\cdots\circ\ar@{-}[r]&\circ_{n-1}\ar@{<=}[r]&\circ_n}\\
$D_n:$&\xymatrix{\circ_1\ar@{-}[r]&\circ_2\ar@{-}[r]&\circ\cdots\cdots\circ\ar@{-}[r]&\;\circ_{n-2}\ar@{-}[r]\ar@{-}[d]&\circ_n\\
&&&\;\circ_{n-1}}
\end{tabular}}
\centerline{\begin{tabular}{ll}
$E_{6,7,8}:$&\xymatrix{\circ_1\ar@{-}[r]&\circ_3\ar@{-}[r]&\circ_4\ar@{-}[d]\ar@{-}[r]&\circ_{5}\ar@{-}[r]&\circ_6\cdots\cdots\circ\\
&&\circ_{2}}\\
$F_4:$&\xymatrix{\circ_1\ar@{-}[r]&\circ_2\ar@{=>}[r]&\circ_3\ar@{-}[r]&\circ_4}\\
$G_2:$&\xymatrix{\circ_1\equiv\!\equiv\!\equiv\!\equiv\!\rangle\circ_2}
\end{tabular}}

For $1\leq i\leq n,$ denote by $\lambda_i$ the fundamental weights corresponding to $\alpha_i^\vee\in\sh$, the coroots identified as $\frac{2\alpha_i}{(\alpha_i,\alpha_i)}$ by the linear isomorphism
\beqs \sh^*\rightarrow (\sh^*)^*=\sh,&&\alpha\mapsto(\alpha,\cdot).\eeqs

Let $Q$ and $\Lambda$ be the root lattice and the weight lattice respectively. Explicitly,
\beqs Q=\bigoplus_{i=1}^n\mz\alpha_i,&& \Lambda=\bigoplus_{i=1}^n\mz\lambda_i.\eeqs
For the correlations between the prime roots and the fundamental weights, see the  tables  given in the Appendix.

%By direct computations, we get
%\beq \alpha_1=2\lambda_1-\alpha_2, \quad\alpha_n=2\lambda_n-\alpha_{n-1},\quad\alpha_i=-\lambda_{i-1}+2\lambda_i-\lambda_{i+1},\quad2\leq i\leq n-1.\eeq

Now let $q$ be a variable. Recall that the quantized enveloping algebra $U_q=U_q(\sg)$ in Jantzen's sense (see \cite{J}) is defined as the associative $\mc(q)$-algebra  generated by $4n$ generators
$E_i,F_i, K_i, K_i^{-1} (1\leq i\leq n)$
subject to the following relations:
\beqs &&K_iK_j=K_jK_i,\,K_iK_i^{-1}=K_i^{-1}K_i=1,\\ &&K_iE_jK_i^{-1}=q^{a_{i,j}}E_j,\, K_iF_jK_i^{-1}=q^{-a_{i,j}}F_j,\\
&&[E_i,F_j]=\delta_{i,j}\frac{K_i-K_i^{-1}}{q-q^{-1}},\\
&&\sum_{s=0}^{1-a_{i,j}}\left[1-a_{i,j}\atop s\right]_{q_i}E_i^{1-a_{i,j}-s}E_jE_i^s=0,\\ &&\sum_{s=0}^{1-a_{i,j}}\left[1-a_{i,j}\atop s\right]_{q_i}F_i^{1-a_{i,j}-s}F_jF_i^s=0, {\rm if\;}i\not=j,
\eeqs
where $q_i=q^\frac{(\alpha_i,\alpha_i)}{2}$ for all $1\leq i\leq n$. %Moreover, the elements $K_i^{\pm1}$ can be regarded as $q^{\pm\alpha_i}$.

Let $U^0$ be the subalgebra of $U_q$ generated by $K_i^{\pm1}(1\leq i\leq n)$. Obviously, $U^0$ is the Laurent polynomial algebra  over $\mc(q)$. Moreover, $U^0$ is also the group algebra of $Q$ with the canonical  basis  $\{K^\alpha\}_{\alpha\in Q}$, where $K^{\pm\alpha_i}=K_i^{\pm1}$ and $K^{\alpha}= \prod_{i=1}^nK^{a_i}$ if $\alpha=\sum_{i=1}^{n}a_i\alpha_i.$

The Weyl group $W$ is generated by the reflections:
\beqs S_{\alpha_i}:\sh^*\rightarrow\sh^*,&&\alpha\mapsto\alpha-\frac{2(\alpha_i,\alpha)}{(\alpha_i,\alpha_i)}\alpha_i.\eeqs
It is well known that the action of $W$ maps $Q$ (also $\Lambda$) to itself.
So $W$ acts on $U^0$ in a natural way:
\beqs\omega\cdot K^\alpha=K^{\omega(\alpha)},\quad\forall\alpha\in Q.\eeqs

Set
\beqs U_{ev}^0&=&\bigoplus_{\alpha\in Q\cap2\Lambda}\mc(q) K^\alpha.\eeqs
Then $U_{ev}^0$ is a subalgebra of $U^0$ and is stable under the action of $W$.  Denote by $(U_{ev}^0)^W$ the $W$-invariants in $U_{ev}^0$.  By the Harish-Chandra Theorem (see Section 6.25 and  the beginning of Chapter 6 in \cite{J}), we have the following
\begin{theo}
The algebra $Z(U_q)$ is isomorphic to $(U_{ev}^0)^W$.
\end{theo}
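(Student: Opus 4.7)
The plan is to establish the isomorphism in three steps: constructing the Harish--Chandra projection and proving its injectivity on the center, identifying the image as sitting inside $(U_{ev}^0)^W$, and finally proving surjectivity via a family of central elements coming from finite-dimensional representations.

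First, I would invoke the triangular decomposition $U_q = U^-\otimes U^0\otimes U^+$ to define the Harish--Chandra projection $\pi\colon U_q \to U^0$ as the projection along the complement of $U^0$ inside the zero weight space. Since any central element $z$ commutes with all $K_i$, it lies in $(U_q)_0 = U^0 \oplus \bigoplus_{\alpha > 0} U^-_{-\alpha} U^0 U^+_\alpha$, making $\pi(z)$ well-defined. For injectivity on $Z(U_q)$, I would use the action on Verma modules: for generic $\lambda\in\sh^*$ the module $M(\lambda)$ is irreducible, so $z$ acts by a scalar $\chi_\lambda(z)$ obtained by evaluating $\pi(z)$ at $\lambda$, i.e.\ substituting $K_i\mapsto q^{(\alpha_i,\lambda)}$. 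If $\pi(z) = 0$, these scalars all vanish, so $z$ annihilates every $M(\lambda)$ and must therefore be zero.

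Next, to pin down the image, I would twist $\pi$ by the algebra automorphism $\tau_{-\rho}$ defined by $K^\mu\mapsto q^{-(\mu,\rho)}K^\mu$. The standard computation yields $\chi_\lambda(z) = \chi_{w\cdot\lambda}(z)$ for $w\in W$ under the dot action $w\cdot\lambda = w(\lambda+\rho)-\rho$; after the $\tau_{-\rho}$ twist, this translates into genuine $W$-invariance of $\tau_{-\rho}\circ\pi(z)$. The parity restriction to $U_{ev}^0$ would then come from a direct analysis: a monomial $K^\mu$ can contribute to $\tau_{-\rho}\circ\pi(z)$ only if the associated character identity is consistent with $W$-invariance and integrality on all Verma modules, which forces $\mu\in 2\Lambda\cap Q$.

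For surjectivity, which I expect to be the main obstacle, I would construct, for each dominant $\lambda$ with $2\lambda\in Q$, a central element $z_\lambda$ via the quantum trace of the finite-dimensional irreducible $U_q$-module $V(\lambda)$, using the canonical pairing involving $K_{2\rho}$ and the universal $R$-matrix. A computation on the highest weight vector shows that $\tau_{-\rho}\circ\pi(z_\lambda)$ has leading term a nonzero scalar multiple of $K^{2\lambda}$, plus a combination of terms $K^{2\mu}$ with $\mu$ strictly less than $\lambda$ in the dominance order. Symmetrizing over the $W$-orbit of $\lambda$ then produces a triangular family inside $(U_{ev}^0)^W$ indexed by dominant elements of $Q\cap 2\Lambda$, and standard triangularity gives surjectivity. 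The delicate part is verifying the dominance behaviour and the nonvanishing of the leading coefficient, which requires a careful control of the $R$-matrix expansion and the weight multiplicities of $V(\lambda)$; once this is in place, the three steps combine to the claimed isomorphism.
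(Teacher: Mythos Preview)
The paper does not actually prove this theorem: it is stated as a known result and attributed to Jantzen's book \cite{J}, specifically Section~6.25 and the introductory discussion of Chapter~6. So there is no ``paper's own proof'' to compare against beyond the citation.

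Your sketch is essentially the argument carried out in Jantzen's Chapter~6, and is therefore aligned with what the paper is invoking. A few remarks on the details. For injectivity, your phrasing ``$z$ annihilates every $M(\lambda)$ and must therefore be zero'' hides the nontrivial input that the intersection of the annihilators of all finite-dimensional (or all Verma) modules is trivial; the paper's introduction flags exactly this fact as the reason the Harish--Chandra map is injective, so you should make that dependence explicit. For the parity restriction, ``a direct analysis'' is vague; in Jantzen this comes from a concrete sign computation showing that if $K^\mu$ occurs in the shifted image then the character values are only $W$-invariant on all integral weights when $\mu\in 2\Lambda$, and you should say what that computation is. For surjectivity, Jantzen does not use the universal $R$-matrix; he builds $z_\lambda$ for $\lambda\in\Lambda^+\cap\tfrac12 Q$ by a more elementary construction tied to the module $L(\lambda)$ and its weight multiplicities, and then runs the same triangularity argument you describe. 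Your $R$-matrix/quantum-trace route is legitimate and leads to the same triangular family, but it imports more machinery than the reference actually uses; if you want to match the citation, replace that step with Jantzen's direct construction.
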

%\section{The center subalgebra of $U_q(\sg)$}

For convenience, we set:
\beqs \alpha^\diamond&=&\left\{\begin{array}{ll}\displaystyle\sum_{i\;{\rm is\;odd}}\alpha_i,&{\rm\;if\;}\sg{\rm\;is\;of\;type\;}A_n,\\
\alpha_{n-1}+\alpha_{n},&{\rm\;if\;}\sg{\rm\;is\;of\;type\;}D_n,\end{array}\right.\eeqs

\begin{prop}
\beqs Q\cap2\Lambda&=&\left\{\begin{array}{ll}2Q \ (\not=2\Lambda),&{\rm\;if\;}\sg{\rm\;is\;of\;type\;}A_{2k}{\;\rm or\;}E_6,\\
2\Lambda\  (\not=2Q), &{\rm\;if\;}\sg{\rm\;is\;of\;type\;}A_1, B_n, C_n,D_{2k+2}{\;\rm or\;}E_7,\\
2\Lambda\  (=2Q) ,&{\rm\;if\;}\sg{\rm\;is\;of\;type\;}E_8, F_4 {\;\rm or\;}G_2,\\
2Q+\mz\alpha^\diamond\ (\not=2\Lambda),&{\rm\;if\;}\sg{\rm\;is\;of\;type\;}A_{2k+1}{\;\rm or\;}D_{2k+3}\, (k\geq1).\\
\end{array}\right.\eeqs
In particular, if $\sg$ is of  type $D_{n}$ with $n$ odd, then
\beqs 2Q+\mz\alpha^\diamond=4\mz\lambda_{n-1}+4\mz\lambda_n+\mz\alpha^\diamond+\displaystyle\sum_{i=1}^{n-2}2\mz\lambda_i.\eeqs
\end{prop}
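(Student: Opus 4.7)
The plan is to reduce the computation of $Q\cap 2\Lambda$ to the well-known structure of the fundamental group $\Lambda/Q$, and then to treat the remaining cases type-by-type. Since the doubling map $\Lambda\to 2\Lambda$ is an isomorphism of free abelian groups, one has $Q\cap 2\Lambda=2S$ where
$$S=\{\lambda\in\Lambda\mid 2\lambda\in Q\}.$$
Because $Q\subseteq S\subseteq\Lambda$, the quotient $S/Q$ is precisely the $2$-torsion subgroup of $\Lambda/Q$, and there is a canonical isomorphism $(Q\cap 2\Lambda)/2Q\cong (\Lambda/Q)[2]$. So the task is to identify the $2$-torsion in $\Lambda/Q$ and, where it is non-trivial, lift a generator back to a concrete element of $Q$.

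Invoking the standard list of fundamental groups, $\Lambda/Q$ is trivial for $E_8,F_4,G_2$, has odd order for $A_{2k}$ and $E_6$ (orders $2k+1$ and $3$), is annihilated by $2$ for $A_1,B_n,C_n,D_{2k+2}$ and $E_7$, and is cyclic of order $2k+2$ or $4$ for $A_{2k+1}$ and $D_{2k+3}$ respectively. In the first three cases one reads off $Q\cap 2\Lambda=2Q$, $Q\cap 2\Lambda=2\Lambda=2Q$, or $Q\cap 2\Lambda=2\Lambda\neq 2Q$ directly. For the two remaining cases only a subgroup of order $2$ of $\Lambda/Q$ is $2$-torsion, so $[Q\cap 2\Lambda:2Q]=2$, and it remains to verify that $\alpha^\diamond$ represents the non-trivial coset.

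For $A_{2k+1}$, I would substitute $\alpha_i=2\lambda_i-\lambda_{i-1}-\lambda_{i+1}$ (with the convention $\lambda_0=\lambda_{n+1}=0$) into the definition of $\alpha^\diamond$ to get $\alpha^\diamond=2\sum_{i=1}^{n}(-1)^{i+1}\lambda_i$, and then observe, using $\lambda_i\equiv i\lambda_1\pmod Q$, that $\tfrac12\alpha^\diamond\equiv(k+1)\lambda_1\equiv\lambda_{k+1}\pmod Q$, which is the unique element of order $2$ in $\Lambda/Q\cong\mathbb Z/(2k+2)$. For $D_{2k+3}$, the identity $\alpha^\diamond=\alpha_{n-1}+\alpha_n=2e_{n-1}$ in the orthonormal realization, combined with $2e_{n-1}-2e_n=2\alpha_{n-1}\in 2Q$, shows that $\tfrac12\alpha^\diamond\equiv e_n\equiv\lambda_n-\lambda_{n-1}\pmod Q$, which generates the $2$-torsion of $\Lambda/Q\cong\mathbb Z/4$. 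The bookkeeping for $A_{2k+1}$ is the slightly delicate step.

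Finally, for the ``in particular'' identity in type $D_n$ with $n$ odd, I would prove the two inclusions separately. For $\supseteq$, the congruence $2e_i\equiv\alpha^\diamond\pmod{2Q}$ (which follows since $2e_i-2e_{n-1}\in 2Q$) combined with $\lambda_i=e_1+\cdots+e_i$ for $i\le n-2$ gives $2\lambda_i\equiv i\alpha^\diamond\pmod{2Q}$, so the generators $2\lambda_i$ lie in $2Q+\mathbb Z\alpha^\diamond$; the generators $4\lambda_{n-1}$ and $4\lambda_n$ are handled via $4\lambda_{n-1}=2\alpha_{n-1}+2\lambda_{n-2}$ and $4\lambda_n=2\alpha_n+2\lambda_{n-2}$. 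For $\subseteq$, I would expand each $2\alpha_i$ in the basis $\{\lambda_j\}$ via the Cartan matrix and use the relation $\alpha^\diamond=2\lambda_{n-1}+2\lambda_n-2\lambda_{n-2}$ to eliminate any ``forbidden'' occurrence of $2\lambda_{n-1}$ or $2\lambda_n$ in favour of $4\lambda_{n-1},4\lambda_n$ and $\alpha^\diamond$. This last step is computational but routine once the key relation $\alpha^\diamond\equiv 2e_{n-1}$ has been pinned down.
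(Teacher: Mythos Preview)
Your argument is correct and takes a genuinely different route from the paper. The paper proceeds by brute force: it writes a generic $\alpha=\sum a_i\alpha_i\in Q$ in the $\lambda_j$-basis via the Cartan matrix, reads off the parity constraints on the $a_i$ imposed by $\alpha\in 2\Lambda$, and solves them type by type (with separate paragraphs for $A_n$, $D_n$ odd, and $E_6$). You instead observe once and for all that $Q\cap 2\Lambda=2S$ with $S/Q=(\Lambda/Q)[2]$, and then import the standard description of the fundamental group $\Lambda/Q$; this dispatches all types simultaneously and reduces the residual work to checking that $\tfrac12\alpha^\diamond$ represents the unique element of order $2$ in the two cyclic cases. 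Your approach is cleaner and explains \emph{why} the answer comes out as it does (odd-order fundamental group forces $Q\cap 2\Lambda=2Q$, exponent-$2$ fundamental group forces $Q\cap 2\Lambda=2\Lambda$), whereas the paper's approach is entirely self-contained and does not presuppose the table of $\Lambda/Q$. One cosmetic point: in the $D_{2k+3}$ step you invoke $2e_{n-1}-2e_n=2\alpha_{n-1}\in 2Q$ to deduce $e_{n-1}\equiv e_n\pmod Q$, but the simpler fact $e_{n-1}-e_n=\alpha_{n-1}\in Q$ already does the job; your conclusion is unaffected. For the ``in particular'' identity your two-inclusion argument via $2e_i\equiv\alpha^\diamond\pmod{2Q}$ and the Cartan-matrix expansion of each $2\alpha_i$ is essentially the same computation the paper sketches, just organised more systematically.
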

\begin{proof}
 From Table 1 in Appendix, we know that $\Lambda=Q$ if and only if $\sg$ is one of $E_8, F_4, G_2$, and $2Q\not=2\Lambda\subset Q$ if and only if $\sg$ is one of
$A_1, B_n, C_n, D_{2k+2}, E_7$. It follows that  $Q\cap2\Lambda=2\Lambda$ in these cases.

Let $\alpha=\sum_{i=1}^{n}a_i\alpha_i\in Q\cap2\Lambda$. By Table 2 in Appendix, we have the following discussion.

(1) If $\sg$ is of type $A_{n}$, then
\beqs\alpha&=&(2a_1-a_2)\lambda_1+(2a_2-a_1-a_3)\lambda_2+\cdots+(2a_n-a_{n-1})\lambda_n.\eeqs
Moreover, $\alpha\in2\Lambda$ implies that \beqs &&a_2\in2\mz,\; a_1+a_3\in 2\mz,\; a_4\in2\mz,\cdots,\\
&&a_{n-1}\in2\mz,\; a_n+a_{n-2}\in 2\mz,\; 2a_{n-3}\in2\mz,\cdots.\eeqs
When $n=2k$, all $a_i$ are even. Hence, $Q\cap2\Lambda\subset 2Q$ and we have that $Q\cap2\Lambda=2Q$ because $2Q\subset Q$ and $2Q\subset 2\Lambda.$
When $n=2k+1$, all $a_{2i}$ are even and $a_1-a_{2i+1}\in2\mz$. Thus we have $Q\cap2\Lambda=2Q+\mz\alpha^\diamond$.

(2) If $\sg$ is of type $D_{n}$ with $n$ odd, then we have
\beqs\alpha&=&(2a_1-a_2)\lambda_1+(2a_2-a_1-a_3)\lambda_2+\cdots+(2a_{n-3}-a_{n-4}-a_{n-2})\lambda_{n-3}\\
&&+(2a_{n-2}-a_{n-3}-a_{n-1}-a_n)\lambda_{n-2}+(2a_{n-1}-a_{n-2})\lambda_{n-1}+(2a_n-a_{n-2})\lambda_n.\eeqs
and
\beqs &&a_1,a_2,a_3,\cdots,a_{n-2}, a_{n-1}+a_n\in2\mz.\eeqs
Since $n$ is odd,  we have $Q\cap2\Lambda\subset2Q+\mz\alpha^\diamond$. Now $\alpha^\diamond=2(\lambda_{n}+\lambda_{n-1}-\lambda_{n-2})\in Q\cap2\Lambda$ and $2Q\subset Q\cap2\Lambda$. It follows that
$Q\cap2\Lambda=2Q+\mz\alpha^\diamond$.

Furthermore, since $(2a_{n-1}-a_{n-2})+(2a_n-a_{n-2})=(2a_{n-1}+2a_n)-2a_{n-2}\in4\mz$ and $2\lambda_i\in Q$ ($1\leq i\leq n-2$),  we deduce that
\beqs 2Q+\mz\alpha^\diamond=4\mz\lambda_{n-1}+4\mz\lambda_n+\mz\alpha^\diamond+\displaystyle\sum_{i=1}^{n-2}2\mz\lambda_i.\eeqs

(3) If $\sg$ is of type $E_{6}$, we have
\beqs \alpha&=&(2a_1-a_3)\lambda_1+(2a_2-a_4)\lambda_2+(2a_3-a_1-a_4)\lambda_3+(2a_4-a_2-a_3-a_5)\lambda_4\\
&&+(2a_5-a_4-a_6)\lambda_5+(2a_6-a_5)\lambda_6\in 2\Lambda.\eeqs
However, $2a_1-a_3, 2a_2-a_4, 2a_6-a_5\in2\mz$ imply  $a_3, a_4, a_5\in2\mz$, and $2a_3-a_1-a_4, 2a_5-a_4-a_6, 2a_4-a_2-a_3-a_5\in2\mz$ imply  $a_1, a_2, a_6\in2\mz$.
Thus,  $Q\cap2\Lambda\subset 2Q$,   and hence $Q\cap2\Lambda=2Q$.
\end{proof}

\section{Minimal generators of $Z(U_q(\sg))$}

Let $\Lambda^+$ be the set of dominant integral weights and set $\Psi_{ev}=Q\cap2\Lambda^+$. %Then \beqs (U_{ev}^0)^W&=&{\rm span}\{f^\alpha| \alpha\in \Psi\}.\eeqs
 For any $\alpha\in Q\cap2\Lambda$, we define
\beqs f^\alpha&=&\frac1{|W|}\sum_{\omega\in W}K^{\omega(\alpha)}.\eeqs

\begin{prop}
The invariant subalgebra $(U_{ev}^0)^W$ is spanned by $f^\alpha (\alpha\in\Psi_{ev})$.
\end{prop}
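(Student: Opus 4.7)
The plan is to use the basic fact that the $W$-invariants in a permutation module are spanned by orbit sums, together with the standard fact that each $W$-orbit in $Q\cap 2\Lambda$ contains a unique dominant representative. Since the Weyl group $W$ preserves both $Q$ and $\Lambda$, it preserves the intersection $Q\cap 2\Lambda$, and hence acts on $U^0_{ev}=\bigoplus_{\alpha\in Q\cap 2\Lambda}\mathbb{C}(q)K^\alpha$ by permuting the basis $\{K^\alpha\}$.

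First I would verify that each $f^\alpha$ really lies in $(U_{ev}^0)^W$. For $\omega'\in W$, the substitution $\omega\mapsto \omega'\omega$ permutes $W$, so
\[
\omega'\cdot f^\alpha=\frac1{|W|}\sum_{\omega\in W}K^{\omega'\omega(\alpha)}=f^\alpha,
\]
and each summand lies in $U^0_{ev}$ because $\omega(\alpha)\in Q\cap 2\Lambda$.

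Next I would take an arbitrary element $z=\sum_{\alpha\in Q\cap 2\Lambda}c_\alpha K^\alpha\in(U_{ev}^0)^W$. Invariance forces $c_{\omega(\alpha)}=c_\alpha$ for all $\omega\in W$, so the coefficients are constant along $W$-orbits. By the standard Weyl group theory, every $W$-orbit in $\Lambda$ (hence in $Q\cap 2\Lambda$) meets $\Lambda^+$ in exactly one point, so it meets $\Psi_{ev}=Q\cap 2\Lambda^+$ in exactly one point. Writing $W_\alpha$ for the stabilizer of $\alpha$, the orbit sum satisfies
\[
\sum_{\beta\in W\alpha}K^\beta=\frac{|W|}{|W_\alpha|}\,f^\alpha,
\]
so grouping the sum defining $z$ by orbits gives
\[
z=\sum_{\alpha\in\Psi_{ev}}c_\alpha\sum_{\beta\in W\alpha}K^\beta=\sum_{\alpha\in\Psi_{ev}}\frac{|W|\,c_\alpha}{|W_\alpha|}\,f^\alpha,
\]
which exhibits $z$ as a $\mathbb{C}(q)$-linear combination of the $f^\alpha$ with $\alpha\in\Psi_{ev}$.

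There is no real obstacle here; the proposition is essentially the observation that orbit sums span the invariants, combined with the fact that dominant weights parametrize orbits. The only point that needs care is the invariance of $Q\cap 2\Lambda$ under $W$, but this is immediate. Note that the $f^\alpha$ for $\alpha\in\Psi_{ev}$ are in fact linearly independent (distinct dominant weights give disjoint orbits), so this argument in fact produces a basis, though only the spanning statement is claimed here.
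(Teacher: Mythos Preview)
Your proof is correct and follows essentially the same approach as the paper: both show that a $W$-invariant element is a combination of the $f^\alpha$ by exploiting that the coefficients are constant along $W$-orbits (the paper does this via the averaging operator, you do it by grouping terms orbit-by-orbit). You are slightly more explicit than the paper in making the final reduction to dominant representatives $\alpha\in\Psi_{ev}$, which the paper leaves implicit via the obvious relation $f^\mu=f^{\omega(\mu)}$.
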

\begin{proof}
Clearly, $f^\alpha\in(U_{ev}^0)^W$ for all $\alpha\in \Psi_{ev}$.

For any $\sum_{\mu}c_\mu K^\mu\in(U_{ev}^0)^W$ and $\omega\in W$, we have
\beqs \sum_{\mu}c_\mu K^\mu=\omega\cdot\Big(\sum_{\mu}c_\mu K^\mu\Big)=\sum_{\mu}c_\mu K^{\omega(\mu)}.\eeqs
It follows that
\beqs \sum_{\mu}c_\mu K^\mu&=&\frac{1}{|W|}\sum_{\omega\in W}\omega\cdot\Big(\sum_{\mu}c_\mu K^\mu\Big)\\
&=&\frac{1}{|W|}\Big(\sum_{\mu}c_\mu\sum_{\omega\in W} K^{\omega(\mu)}\Big)=\frac{1}{|W|}\sum_{\mu}c_\mu f^\mu.\eeqs
\end{proof}

%\section{Minimal generators}

For a dominant integral weight $\lambda$, we  denote by $L(\lambda)$  the irreducible highest weight $\sg$-module with highest weight $\lambda$. Recall that the character of $L(\lambda)$ is given by  (for example, see \cite[Subsection 22.5]{Hum}):
\beq\label{char} \chi(L(\lambda))&=&\sum_{\mu\in P(\lambda)}\dim L(\lambda)_\mu e^\mu,\eeq
where $L(\lambda)_\mu$ is the weight space with respect to weight $\mu$ and $P(\lambda)=\{\mu|\dim L(\lambda)\not=0\}$ is the weight set of $L(\lambda)$.

Let $R(\sg)=\mc(q)\otimes_\mz r(\sg)$, where  $r(\sg)$ is the Green ring of $\sg$. It is well known that $r(\sg)$ is a $\mz$-algebra with a basis:  $\{\chi(L(\lambda))|\lambda\in\Lambda^+\}$.  The multiplication of $r(\sg)$ is given by
\beq\label{char-prod} \chi(L(\lambda))\cdot\chi(L(\mu))&=&\sum_{\gamma}c^\gamma_{\lambda,\mu}\chi(L(\gamma)),\eeq
where $c^\gamma_{\lambda,\mu}$ is the multiplicity of $L(\gamma)$ appeared in the decomposition of the tensor product $L(\lambda)\otimes L(\mu)$.

%For convenience,  we use the notation $z_i=\chi(L(\lambda_i))$. Then $R(\sg)$ (resp. $r(\sg)$) is a polynomial $\mc(q)$-algebra (resp. $\mz$-algebra) in variables $z_1,\cdots,z_n$.
The following lemma is the Chevalley-Shephard-Todd theorem, see \cite{C} and \cite{ST}.

\begin{lemma}
The  Green ring $r(\sg)$ $($and therefore $R(\sg) )$ is a polynomial algebra in variables $\chi(L(\lambda_i)), 1\leq i\leq n$.
\end{lemma}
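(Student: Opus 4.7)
The plan is to prove this directly via a triangularity argument in the dominance order rather than invoking Chevalley--Shephard--Todd as a black box. Concretely, the goal is to show that the monomials $\prod_{i=1}^n \chi(L(\lambda_i))^{m_i}$, indexed by tuples $(m_1,\ldots,m_n)\in\mz_{\geq 0}^n$, form a $\mz$-basis of $r(\sg)$; this is exactly the assertion that $r(\sg)$ is polynomial in the fundamental characters, after which extending scalars to $\mc(q)$ gives the claim for $R(\sg)$.

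The key step is to establish the triangularity identity
\beqs \prod_{i=1}^n \chi(L(\lambda_i))^{m_i} \;=\; \chi(L(\lambda))\;+\;\sum_{\substack{\mu\in\Lambda^+ \\ \mu<\lambda}} a_{\lambda,\mu}\,\chi(L(\mu)), \eeqs
where $\lambda=\sum_i m_i\lambda_i\in\Lambda^+$, $a_{\lambda,\mu}\in\mz_{\geq 0}$, and $\mu\leq\lambda$ means $\lambda-\mu\in Q^+:=\sum_i\mz_{\geq 0}\alpha_i$. This rests on the standard Lie-theoretic fact that in a tensor product $L(\mu_1)\ot\cdots\ot L(\mu_k)$ of highest weight $\sg$-modules, the tensor product of highest weight vectors generates a copy of $L(\mu_1+\cdots+\mu_k)$; moreover the $(\mu_1+\cdots+\mu_k)$-weight space in the tensor product is one-dimensional, since any weight of the tensor product is of the form $\nu_1+\cdots+\nu_k$ with $\nu_i\leq\mu_i$ in dominance order. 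By Weyl's complete reducibility theorem, $L(\mu_1+\cdots+\mu_k)$ therefore appears in the decomposition with multiplicity exactly one, while every other simple summand $L(\gamma)$ must satisfy $\gamma<\mu_1+\cdots+\mu_k$, because $\gamma$ is itself a weight of the tensor product. Applying this iteratively to $L(\lambda_1)^{\ot m_1}\ot\cdots\ot L(\lambda_n)^{\ot m_n}$ and reading off characters via (\ref{char-prod}) yields the displayed formula.

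Since for each $\lambda\in\Lambda^+$ the set $\{\mu\in\Lambda^+:\mu\leq\lambda\}$ is finite, fixing any linear extension of the dominance order makes the transition matrix from the monomial family $\{\prod_i\chi(L(\lambda_i))^{m_i}\}$ to the character basis $\{\chi(L(\lambda))\}_{\lambda\in\Lambda^+}$ upper unitriangular, and hence invertible over $\mz$. The monomials therefore form a $\mz$-basis of $r(\sg)$, which is equivalent to the algebraic independence of $\chi(L(\lambda_1)),\ldots,\chi(L(\lambda_n))$ together with the fact that they generate; so $r(\sg)\cong\mz[\chi(L(\lambda_1)),\ldots,\chi(L(\lambda_n))]$ and $R(\sg)=\mc(q)\ot_\mz r(\sg)\cong\mc(q)[\chi(L(\lambda_1)),\ldots,\chi(L(\lambda_n))]$. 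The main obstacle is nailing down the triangularity cleanly, in particular the uniqueness of the highest weight appearing in an iterated tensor product; once that is in place, the rest reduces to linear algebra over $\mz$ in the well-founded poset $(\Lambda^+,\leq)$.
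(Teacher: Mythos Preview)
Your proof is correct. The paper, by contrast, gives no argument at all for this lemma: it simply states ``The following lemma is the Chevalley--Shephard--Todd theorem, see \cite{C} and \cite{ST}'' and moves on. So your approach is genuinely different in that you supply a self-contained proof rather than an external citation.

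Your triangularity argument is in fact the standard direct route, and it is worth noting that the paper itself relies on the very same identity later: Eq.~(\ref{induction}) in the proof of Lemma~3.6 is exactly your displayed formula $\prod_i z_i^{a_i}=\chi(L(\lambda))+\sum_{\gamma<\lambda}d^\gamma_\lambda\,\chi(L(\gamma))$. So what you have done is essentially extract that ingredient and use it earlier to prove Lemma~3.2 from scratch, rather than appealing to Chevalley--Shephard--Todd. This buys you a self-contained treatment and makes the logical dependence transparent; the paper's citation, on the other hand, is terse (and arguably a bit loose, since the classical Chevalley--Shephard--Todd theorem is about polynomial invariants of reflection groups, and one needs the further identification of $r(\sg)$ with $(\mz[\Lambda])^W$ together with the exponential-invariants version of the theorem to conclude). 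Your argument also makes explicit why the transition matrix is unitriangular over $\mz$, which is precisely what is needed for the later manipulations in Lemma~3.6 and Theorem~3.7.
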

For convenience,  we set $\Psi=\{\lambda/2|\lambda\in\Psi_{ev}\}.$  Then we have the following lemma.

\begin{lemma}
There exists an algebra monomorphism $\theta: (U^0_{ev})^W\rightarrow R(\sg)$ defined by
\beqs \theta\left(\sum_{\omega\in W}K^{\omega(\alpha)}\right)&=&\sum_{\omega\in W}e^{\omega(\alpha/2)},\quad \forall \alpha\in Q\cap2\Lambda.\eeqs
Moreover, $Im(\theta)={\rm span}\{\chi(L(\lambda))|\lambda\in\Psi\}$.\end{lemma}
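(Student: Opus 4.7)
First I would build $\theta$ as the restriction of a more basic algebra map. On the generators define $\widetilde\theta(K^\alpha)=e^{\alpha/2}$ for each $\alpha\in Q\cap 2\Lambda$, which makes sense because $\alpha\in 2\Lambda$ places $\alpha/2$ in the weight lattice $\Lambda$. Since $K^\alpha K^\beta=K^{\alpha+\beta}$ and $e^{\alpha/2}e^{\beta/2}=e^{(\alpha+\beta)/2}$, this extends to an algebra homomorphism $\widetilde\theta\colon U_{ev}^0\to\mc(q)[\Lambda]$ into the $\mc(q)$-group algebra of $\Lambda$. A direct check shows $\widetilde\theta$ is $W$-equivariant, since $\widetilde\theta(\omega\cdot K^\alpha)=e^{\omega(\alpha)/2}=\omega\cdot\widetilde\theta(K^\alpha)$. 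So restricting to $W$-invariants gives the desired map $\theta\colon(U_{ev}^0)^W\to(\mc(q)[\Lambda])^W\cong R(\sg)$, agreeing on the spanning elements $\sum_\omega K^{\omega(\alpha)}$ with the formula in the statement.

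Next I would establish injectivity. By Proposition 3.1 (after grouping equal terms) a basis of $(U_{ev}^0)^W$ is given by the orbit sums $\sum_{\beta\in W\cdot\alpha}K^\beta$ indexed by distinct $W$-orbits on $Q\cap 2\Lambda$, or equivalently by $\alpha\in\Psi_{ev}$. The map $\theta$ sends such an orbit sum, up to the positive rational scalar $|\mathrm{Stab}_W(\alpha/2)|/|W|$ coming from the definition of $f^\alpha$, to the orbit sum $m_{\alpha/2}=\sum_{\mu\in W\cdot(\alpha/2)}e^\mu$. Since the distinct $W$-orbit sums $\{m_\mu\}_{\mu\in\Psi}$ are linearly independent in $\mc(q)[\Lambda]$, injectivity follows.

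For the image description, the previous paragraph shows $\mathrm{Im}(\theta)=\mathrm{span}\{m_\mu\mid\mu\in\Psi\}$. I would then invoke the standard Weyl-character triangularity
\[
\chi(L(\mu))=m_\mu+\sum_{\nu\in\Lambda^+,\ \nu<\mu}a_{\mu,\nu}\,m_\nu,
\]
with nonnegative integer coefficients $a_{\mu,\nu}$ and the sum running over dominant $\nu$ strictly below $\mu$ in the usual partial order that actually occur as weights of $L(\mu)$. This lets me rewrite each $\chi$ in terms of $m$'s and vice versa. The crucial closure property is that $\Psi$ is stable under this expansion: if $\mu\in\Psi$ so that $2\mu\in Q$, and $\nu\in\Lambda^+$ appears as a weight of $L(\mu)$, then $\mu-\nu\in Q$, hence $2\nu=2\mu-2(\mu-\nu)\in Q$, so $\nu\in\Psi$. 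Therefore the transition matrix between $\{m_\mu\}_{\mu\in\Psi}$ and $\{\chi(L(\mu))\}_{\mu\in\Psi}$ is unitriangular and invertible, yielding $\mathrm{Im}(\theta)=\mathrm{span}\{\chi(L(\mu))\mid\mu\in\Psi\}$.

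The step I expect to be the most delicate is this closure of $\Psi$ under the triangular expansion. Without it, the span of $\{m_\mu\}_{\mu\in\Psi}$ might require characters $\chi(L(\nu))$ with $\nu\notin\Psi$ to express it, and the clean image description would fail. The remaining steps are essentially routine consequences of the definitions together with basic facts about the weight lattice action of $W$ and Weyl characters.
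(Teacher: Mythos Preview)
Your proposal is correct and follows essentially the same approach as the paper: both hinge on the triangular relation between orbit sums $m_\mu$ and characters $\chi(L(\mu))$, together with the closure of $\Psi$ under passing to weights of $L(\mu)$ (your argument $\mu-\nu\in Q\Rightarrow 2\nu\in Q$ is exactly the paper's observation that $\lambda+Q\subset\frac{Q}{2}\cap\Lambda$). The only organizational difference is that you first extend $\widetilde\theta$ to all of $U_{ev}^0$ and then restrict to invariants, whereas the paper defines $\theta$ directly on orbit sums and runs the induction to verify the image lands in $R(\sg)$; your route makes well-definedness and the algebra-homomorphism property immediate, but the substantive content is identical.
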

\begin{proof}
First, $\chi(L(\lambda))$ is invariant under the $W$-action (see \cite[Theorem 21.2]{Hum}).

For each weight $\lambda\in\Lambda$, let $O_\lambda$ denote the $W$-orbit containing $\lambda$, and let $|O_\lambda|$ denote the length of $O_\lambda$.  Then we have:
\beqs \frac{|W|}{|O_\lambda|}\sum_{\mu\in O_\lambda} e^\mu&=&\sum_{\omega\in W}e^{\omega(\lambda)}.\eeqs

Since for any $\lambda\in \Lambda$ there exists a $w_0\in W$ such that $\lambda'=w_{0}(\lambda)\in \Lambda^+$ and $\lambda=0$ if and only if $\lambda'=0,$ we have
$$\sum_{\omega\in W}e^{\omega(\lambda)}= \sum_{\omega\in W}e^{\omega(\lambda')}.
 $$
% and $\sum_{\omega\in W}e^{\omega(0)}=|W|\in R(\sg).$
Thus, in order to show that $\sum_{\omega\in W}e^{\omega(\lambda)}\in R(\sg)$, it suffices to prove the equality for $\lambda\in\Lambda^+$. Next we prove it by induction.

Clearly, if $\lambda\in\Lambda^+$ and $\lambda-\alpha_i\not\in\Lambda^+$ for each $i$, then we have:
\beqs \sum_{\omega\in W}e^{\omega(\lambda)}&=&\frac{|W|}{|O_\lambda|}\chi({L(\lambda)})\in R(\sg).\eeqs

%For every weight $\nu\in P(\lambda)$, there exists a unique dominant weight $\mu\in P(\lambda)$ such that $\nu\in O_\mu$.
Note that by Eq (\ref{char}), if $\lambda\in\Lambda^+$, then
\beqs \sum_{\omega\in W}e^{\omega(\lambda)}&=&\frac{|W|}{|O_\lambda|}\left(\chi({L(\lambda)})-\sum_{\mu\in P(\lambda)\setminus O_\lambda}\dim L(\lambda)_\mu e^{\mu}\right)\\
&=&\frac{|W|}{|O_\lambda|}\chi (L({\lambda}))-\sum_{0\leq \mu<\lambda}\frac{|O_\mu|}{|O_\lambda|}\dim L(\lambda)_\mu\sum_{\omega\in W}e^{\omega(\mu)},\eeqs
where $\mu<\lambda$ means $\lambda-\mu=\sum_{i=1}^n c_i\alpha_i\not=0$ for some nonnegative integers $c_i$.

If $\lambda\in\frac{Q}2\cap\Lambda$ and $\dim L(\lambda)_\mu>0$, then $\mu\in\lambda+Q\subset \frac{Q}2\cap\Lambda$ since $Q\subset\frac{Q}2\cap\Lambda$.

Under the assumption of induction that $\sum_{\omega\in W}e^{\omega(\mu)}\in R(\sg)$ for all $0\leq\mu<\lambda$, we have $\sum_{\omega\in W}e^{\omega(\lambda)}\in R(\sg)$.

Now we have showed that the linear map $\theta$ is well defined.
Moreover, the linear map  $\theta$ preserves the multiplication  because $K^\alpha K^{\alpha'}=K^{\alpha+\alpha'}$ and $e^\mu e^{\mu'}=e^{\mu+\mu'}$. So $\theta$ is also an algebra homomorphism.
Finally, the injectivity of $\theta$ is  easy to check.
\end{proof}

Now define the set $\Psi_{\rm min}=\{\lambda\in\Psi\setminus\{0\}\,|\, \lambda\not=\mu_1+\mu_2,\forall \mu_1, \mu_2\in\Psi\setminus\{0\}\}.$

\begin{lemma}
$\Psi_{\rm min}$ is finite.
\end{lemma}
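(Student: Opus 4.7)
The plan is to use the fundamental weight basis to identify $\Lambda^+$ with $\mz_{\geq 0}^n$, recognize that $\Psi$ is closed under componentwise subtraction in this model, and then conclude via Dickson's lemma that the set of indecomposables $\Psi_{\rm min}$ is finite.

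First I would unravel the definitions. Since $\Psi_{ev}=Q\cap 2\Lambda^+$, we have $\Psi=\{\mu\in\Lambda^+\,|\,2\mu\in Q\}$. Writing $\mu=\sum_{i=1}^n m_i\lambda_i$ with $m_i\in\mz_{\geq 0}$ gives an identification $\Lambda^+\simeq\mz_{\geq 0}^n$ via $\mu\mapsto(m_1,\dots,m_n)$. The crucial closure property is that if $\mu_1,\mu_2\in\Psi$ and $\mu_1\leq\mu_2$ componentwise, then $\mu_2-\mu_1\in\Lambda^+$ (its coefficients stay nonnegative) and $2(\mu_2-\mu_1)=2\mu_2-2\mu_1\in Q$ because $Q$ is an additive subgroup and $2\mu_1,2\mu_2\in Q$; hence $\mu_2-\mu_1\in\Psi$.

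Next I would deduce that $\Psi_{\rm min}$ is an antichain in $(\mz_{\geq 0}^n,\leq)$: if $\lambda\in\Psi_{\rm min}$ and some $\mu\in\Psi\setminus\{0\}$ distinct from $\lambda$ satisfied $\mu\leq\lambda$ componentwise, then by the closure observation $\lambda-\mu\in\Psi\setminus\{0\}$, yielding the forbidden decomposition $\lambda=\mu+(\lambda-\mu)$. Thus any two distinct elements of $\Psi_{\rm min}$ are incomparable under the componentwise order, and Dickson's lemma (every antichain in $\mz_{\geq 0}^n$ is finite) gives $|\Psi_{\rm min}|<\infty$.

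I do not anticipate any real technical obstacle: the entire content lies in the elementary closure observation, which is immediate from $Q$ being a subgroup of $\Lambda$. Equivalently, one could apply Gordan's lemma to the rational polyhedral cone generated by the $\lambda_i$ and the sublattice $L=\{\mu\in\Lambda\,|\,2\mu\in Q\}$, concluding that $\Psi=L\cap\Lambda^+$ is a finitely generated commutative monoid; since $\Psi_{\rm min}$ is forced to lie inside any generating set, its finiteness follows.
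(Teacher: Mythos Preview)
Your argument is correct. The closure property you isolate --- that $\mu_1,\mu_2\in\Psi$ with $\mu_1\leq\mu_2$ componentwise forces $\mu_2-\mu_1\in\Psi$ --- is exactly the point, and once $\Psi_{\rm min}$ is seen to be an antichain in $\mz_{\geq 0}^n$, Dickson's lemma (or, in your alternative phrasing, Gordan's lemma applied to the monoid $\Psi$) finishes immediately.

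The paper proceeds along a closely related but more concrete route. For each $i$ it fixes the least positive integer $c_i$ with $c_i\lambda_i\in\Psi$, and then argues that an element $\lambda=\sum_j c'_j\lambda_j\in\Psi_{\rm min}$ must (essentially) satisfy $c'_i<c_i$ for every $i$: if some $c'_{i_0}\geq c_{i_0}$, one subtracts $c_{i_0}\lambda_{i_0}$ and obtains a forbidden decomposition. Thus $\Psi_{\rm min}$ lies inside an explicit finite box. The underlying subtraction step is the same as yours, only specialised to the particular witnesses $c_i\lambda_i$ rather than to an arbitrary $\mu\in\Psi$ below $\lambda$. Your version is shorter and avoids any case analysis or appeal to characters; the paper's version has the practical advantage that the explicit bounds $c_i$ are immediately reused in the next lemma to enumerate $\Psi_{\rm min}$ type by type.
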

\begin{proof}
By observing Table 1, for all $1\leq i\leq n$, we find  that a minimal positive integer $c_i$ exists such that $c_i\lambda_i\in \Psi$. If $\lambda=\sum_{i=1}^nc'_i\lambda_i\in\Psi_{\rm min}$ such that
$c'_{i_0}\geq c_{i_0}$ for some $i_0$, then $\lambda-c_{i_0}\lambda_{i_0}\in\Psi$ and
\beqs \chi(L(\lambda-c_{i_0}\lambda_{i_0}))\cdot\chi(L(c_{i_0}\lambda_{i_0}))&=&\chi(L(\lambda))+\sum_{0\leq \gamma<\lambda}c_{\lambda-c_{i_0}\lambda_{i_0},c_{i_0}\lambda_{i_0}}^\gamma\chi(L(\gamma)).\eeqs
In fact, $0\leq \gamma<\lambda$ implies $0\leq(\gamma,\gamma)<(\lambda,\lambda)$ and only finitely many $\gamma\in\Psi$ such that $0\leq \gamma<\lambda$.
By induction on the square length $(\lambda,\lambda)$ of $\lambda$, we obtain that $\sum_{i=1}^nc'_i\lambda_i\in\Psi_{\min}$ implies that $c_i'<c_i$ for all $i$.  Hence, $\Psi_{\min}$ is finite.
\end{proof}

Because $\Psi_{\rm min}$ is finite and  the partial order  is transitive, we may write $\Psi_{\rm min}=\{\mu_1,\cdots, \mu_m\}$ such that $i<j$ for all $\mu_i>\mu_j$, where $m=|\Psi_{\rm min}|$.

\begin{lemma}  The following hold:
\begin{enumerate}
\item[(i)] $m=n$ if and only if $\sg$ is of types $A_1, B_n, C_n, D_{n=2k+2}, E_7, E_8, F_4, G_2$.
%\item[(ii)] $m=n+1$ if and only if $\sg$ is of types $A_2, A_3, D_{n=2k+3}$.
\item[(ii)] $m=14$ if $\sg$ is of type $E_6$.
\item[(iii)] $m=n+1$ if $\sg$ is of type $D_{n=2k+3}$.
\item[(iv)] $m\geq n+1$ if $\sg$ is of type $A_n$ with $n\geq 2$.
\end{enumerate}
\end{lemma}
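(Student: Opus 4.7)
The plan is to determine $\Psi$ explicitly from Proposition 2.2 in each case, and then to read off the indecomposable elements of this submonoid of $\Lambda^+$.

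For case (i), Proposition 2.2 gives $Q\cap 2\Lambda=2\Lambda$ (or the stronger $2\Lambda=2Q$ in the self-dual types $E_8,F_4,G_2$) for each listed type, so $\Psi_{ev}=2\Lambda^+$ and $\Psi=\Lambda^+=\bigoplus_{i=1}^n\mN\lambda_i$. As a free commutative monoid on the fundamental weights, $\Lambda^+$ has indecomposable elements exactly $\{\lambda_1,\ldots,\lambda_n\}$, yielding $m=n$. For case (iii), the second formula of Proposition 2.2 gives $\Psi_{ev}=4\mz\lambda_{n-1}+4\mz\lambda_n+\mz\alpha^\diamond+\sum_{i=1}^{n-2}2\mz\lambda_i$; combined with the identity $\alpha^\diamond=2(\lambda_{n-1}+\lambda_n-\lambda_{n-2})$, this shows $\Psi$ is generated as an additive submonoid of $\Lambda^+$ by the $n+1$ dominant weights
\[ \lambda_1,\,\ldots,\,\lambda_{n-2},\quad 2\lambda_{n-1},\quad 2\lambda_n,\quad \lambda_{n-1}+\lambda_n. \]
A coordinate-wise minimality check in the basis $\{\lambda_i\}$ verifies each of these is indecomposable, so $m=n+1$.

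For case (iv), Proposition 2.2 gives $\Psi_{ev}=2Q$ (when $n$ is even) or $\Psi_{ev}=2Q+\mz\alpha^\diamond$ (when $n$ is odd), so $\Psi$ is a proper submonoid of $\Lambda^+$ cut out by a class condition in $\Lambda/Q\cong\mz/(n+1)\mz$. To establish $m\geq n+1$ it suffices to exhibit $n+1$ indecomposable elements of $\Psi$; one may take $(n+1)\lambda_1$, $(n+1)\lambda_n$ together with the binary combinations $\lambda_i+\lambda_{n+1-i}$ for $1\le i<(n+1)/2$ (with the analogous list shifted by $\alpha^\diamond/2$ when $n$ is odd), and verify indecomposability using the $\mz/(n+1)\mz$-grading. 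The precise enumeration of $\Psi_{\min}$ is deferred to the later theorems on $n$-sequences.

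The main obstacle is case (ii). Here Proposition 2.2 gives $Q\cap 2\Lambda=2Q$, so $\Psi=Q\cap\Lambda^+$; since $\Lambda/Q\cong\mz/3\mz$, elements of $\Psi$ are exactly the dominant integral combinations whose total class vanishes modulo $3$. The two fundamental weights $\lambda_2,\lambda_4$ that lie in $Q$ yield two obvious indecomposables, and the remaining indecomposables must arise as minimal dominant combinations of the non-$Q$ weights $\lambda_1,\lambda_3,\lambda_5,\lambda_6$ (each of nontrivial class in $\mz/3\mz$) whose total class is $0$. Controlling this finite enumeration, namely confirming that exactly twelve such minimal combinations exist so that $m=2+12=14$, is the delicate step of the lemma.
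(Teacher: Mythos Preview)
Your treatment of cases (i) and (iii) matches the paper's: once $\Psi=\Lambda^+$ (resp.\ $\Psi=\{\sum a_i\lambda_i\in\Lambda^+: a_{n-1}\equiv a_n\bmod 2\}$), the indecomposables are exactly as you list, and the paper records the same sets. However, there are genuine gaps in (iv) and (ii), and these in turn leave the ``only if'' direction of (i) unproved.

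\medskip
\textbf{Case (iv).} Your proposed list ``$(n{+}1)\lambda_1$, $(n{+}1)\lambda_n$, and $\lambda_i+\lambda_{n+1-i}$ for $1\le i<(n{+}1)/2$'' contains only $2+\lfloor n/2\rfloor$ elements, which is strictly less than $n+1$ for every even $n\ge 4$ (e.g.\ $n=4$ gives four elements, not five). So this does not establish $m\ge n+1$. For odd $n$ the phrase ``shifted by $\alpha^\diamond/2$'' is not meaningful, since $\alpha^\diamond/2=\lambda_1-\lambda_2+\cdots+\lambda_n$ is not dominant and the relevant congruence is modulo $r=(n{+}1)/2$, not $n{+}1$. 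The paper's argument avoids this by exhibiting a different list: the $n$ single minimal elements $\frac{n+1}{\gcd(n+1,2i)}\lambda_i$ ($1\le i\le n$) together with the one extra element $\lambda_1+\lambda_n$, giving $n+1$ distinct members of $\Psi_{\min}$ directly.

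\medskip
\textbf{Case (ii).} You correctly set up the problem --- $\Psi=Q\cap\Lambda^+$, the weights $\lambda_2,\lambda_4$ already lie in $Q$, and one must enumerate minimal dominant combinations of $\lambda_1,\lambda_3,\lambda_5,\lambda_6$ of trivial class in $\mz/3\mz$ --- but you then stop, calling the enumeration ``the delicate step'' without performing it. The paper simply carries out this finite check and writes down the fourteen elements of $\Psi_{\min}$ explicitly; there is no hidden trick, but the enumeration must actually be done (and verified to be exhaustive) for the lemma to be proved.
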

\begin{proof}
If $\sg$ is of type $A_n$ with $n\geq 2$, it holds that
\beqs\lambda_{i}=\frac{n+1-i}{n+1}(\alpha_1+2\alpha_2+\cdots+(i-1)\alpha_{i-1})+\frac{i}{n+1}(\alpha_n+2\alpha_{n-1}+\cdots+(n-i)\alpha_{i+1})+\frac{i(n+1-i)}{n+1}\alpha_i,\eeqs
for all $i$. Then $\frac{n+1}{(n+1,2i)}$ is the minimal positive integer such that $\frac{n+1}{(n+1,2i)}\lambda_i\in Q/2$ and we have the following inclusion:
\beqs\Big\{\lambda_1+\lambda_n, \frac{n+1}{(n+1,2i)}\lambda_{i}\,\Big|\,1\leq i\leq n \Big\}\subset\Psi_{\rm min},\eeqs
% and inclusion is an equality for $n=2,3$: \beqs \Psi_{\rm min}&=&\left\{\begin{array}{ll}\{3\lambda_1,3\lambda_2, \lambda_1+\lambda_2\},\quad {\rm if\;}\sg\;{\rm is\;of\;type}\;A_2;\\ \{2\lambda_1,\lambda_2, 2\lambda_3, \lambda_1+\lambda_3\},\quad {\rm if\;}\sg\;{\rm is\;of\;type}\;A_3.\end{array}\right.\eeqs It follows that  $m>n+1$ for $n\geq4$ and $m=n+1$ for $n=2,3$.
which implies $m\geq n+1>n$, and hence (iv) holds.

Except for the type $A_n(n\geq2)$,  we can list the elements of $\Psi_{\rm min}$ explicitly as follows:
\beqs \Psi_{\rm min}&=&\left\{\begin{array}{ll}\{\lambda_1,\cdots,\lambda_n\},\quad {\rm if\;}\sg\;{\rm is\;of\;type}\;A_1,B_n,C_n,D_{n=2k+2}, E_7, E_8, F_4, G_2;\\
\{3\lambda_1,\lambda_2, 3\lambda_3, \lambda_4, 3\lambda_5, 3\lambda_6,\lambda_{1}+\lambda_{3}, \lambda_{1}+\lambda_{6}, \lambda_{3}+\lambda_{5},\\
\quad\lambda_{5}+\lambda_{6},\lambda_1+2\lambda_5,2\lambda_1+\lambda_5,\lambda_3+2\lambda_6,2\lambda_3+\lambda_6\},\quad {\rm if\;}\sg\;{\rm is\;of\;type}\; E_{6};\\
\{\lambda_1,\cdots,\lambda_{n-2},2\lambda_{n-1},2\lambda_n,\lambda_{n-1}+\lambda_n\},\quad {\rm if\;}\sg\;{\rm is\;of\;type}\; D_{n=2k+3}.
\end{array}\right.\eeqs
So this lemma holds.
\end{proof}

%Define a partial order $\succeq$ over $\Lambda$ as
%\beqs \lambda\succeq\mu &\Leftrightarrow&\lambda-\mu\in\Lambda^+\cup\{0\}.\eeqs
%Moreover, we write $\lambda\succ\mu$ if $\mu\succeq\lambda$ and $\lambda\not=\mu$.

In the sequel,  we set  $z_i=\chi(L(\lambda_i))$  for $1\leq i\leq n$.

\begin{lemma}
The set $\{\chi(L(\lambda)) | \lambda\in\Psi_{\rm min}\}$ is a minimal generating set of $Im(\theta)$.
\end{lemma}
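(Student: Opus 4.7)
The plan is to prove two assertions: (generation) every $\chi(L(\lambda))$ with $\lambda\in\Psi$ lies in the subalgebra $A$ generated by $\{\chi(L(\mu)):\mu\in\Psi_{\rm min}\}$, and (minimality) no element of $\{\chi(L(\mu)):\mu\in\Psi_{\rm min}\}$ is a polynomial in the others. For generation I would induct on the square length $(\lambda,\lambda)$. If $\lambda=0$ or $\lambda\in\Psi_{\rm min}$ the claim is immediate; otherwise by the definition of $\Psi_{\rm min}$ we may write $\lambda=\mu_1+\mu_2$ with $\mu_1,\mu_2\in\Psi\setminus\{0\}$. Applying (\ref{char-prod}) yields
$$\chi(L(\mu_1))\chi(L(\mu_2)) \;=\; \chi(L(\lambda)) \;+\; \sum_{\gamma<\lambda,\,\gamma\in\Lambda^+} c^\gamma_{\mu_1,\mu_2}\chi(L(\gamma)).$$
Each such $\gamma$ satisfies $\gamma\in\Psi$ (since $2\gamma\in 2\lambda+Q\subset Q\cap 2\Lambda$) and $(\gamma,\gamma)<(\lambda,\lambda)$ (writing $\lambda-\gamma=\sum c_i\alpha_i$ with $c_i\geq 0$ not all zero and using dominance of $\gamma$). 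Similarly $(\mu_i,\mu_i)<(\lambda,\lambda)$ for $i=1,2$. The inductive hypothesis places both $\chi(L(\mu_i))$ and all $\chi(L(\gamma))$ in $A$, hence $\chi(L(\lambda))\in A$.

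For minimality I would introduce a grading that turns the dominance partial order into a total ``height''. Fix $\rho'=\sum_{i=1}^n\lambda_i$ in the interior of the dominant chamber and filter $R(\sg)$ by $G_s=\mathrm{span}\{\chi(L(\gamma)):(\gamma,\rho')\leq s\}$. Since every $\gamma<\lambda+\mu$ in the dominance order satisfies $(\gamma,\rho')<(\lambda+\mu,\rho')$, (\ref{char-prod}) gives $[\chi(L(\lambda))]\cdot[\chi(L(\mu))]=[\chi(L(\lambda+\mu))]$ in $\mathrm{gr}(R(\sg))$, so $\mathrm{gr}(R(\sg))$ is the monoid algebra $\mc(q)[\Lambda^+]$. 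Suppose toward a contradiction that for some $\mu_0\in\Psi_{\rm min}$,
$$\chi(L(\mu_0)) \;=\; \sum_I a_I\prod_j \chi(L(\nu_{i_j})), \qquad \nu_{i_j}\in\Psi_{\rm min}\setminus\{\mu_0\}.$$
Passing to leading terms and using linear independence of the basis $\{[\chi(L(\gamma))]\}$ in degree $(\mu_0,\rho')$ of the monoid algebra gives
$$1 \;=\; \sum_{I\,:\,\nu_{i_1}+\cdots+\nu_{i_k}=\mu_0} a_I,$$
so some monomial $I$ of this form must occur with $a_I\neq 0$. But $\Psi$ is closed under addition (since $Q\cap 2\Lambda^+$ is), so for $k\geq 2$ the identity $\mu_0=\nu_{i_1}+(\nu_{i_2}+\cdots+\nu_{i_k})$ expresses $\mu_0$ as a sum of two nonzero elements of $\Psi$, contradicting $\mu_0\in\Psi_{\rm min}$; for $k=1$ it forces $\mu_0\in\Psi_{\rm min}\setminus\{\mu_0\}$, absurd.

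The main obstacle is the minimality step. The dominance order on $\Lambda^+$ is only partial, so incomparable dominant weights can simultaneously appear as leading terms of different monomials in $P$, and a direct coefficient comparison inside (\ref{char-prod}) gets tangled by potential cancellation across monomials whose top weights are incomparable. Introducing the strictly positive functional $(\cdot,\rho')$ linearizes the height so that the associated graded is the honest monoid algebra $\mc(q)[\Lambda^+]$; after this reduction both the leading-term extraction and the use of the $\Psi_{\rm min}$ definition become immediate.
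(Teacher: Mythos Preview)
Your generation argument is correct and essentially the same as the paper's: both use the product formula (\ref{char-prod}) and induct on a height (the paper phrases it via Eq.~(\ref{induction}) and Lemma 3.3, you spell out the induction on $(\lambda,\lambda)$).

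Your minimality argument, however, has a genuine gap at the step ``passing to leading terms \ldots gives $1=\sum_{I:\sum\nu_{i_j}=\mu_0}a_I$.'' The problem is that the symbol map to $\mathrm{gr}(R(\sg))$ is multiplicative but not additive, so you cannot simply read off the degree-$(\mu_0,\rho')$ part of $\sum_I a_I\prod_j\chi(L(\nu_{i_j}))$ as $\sum_{I:d_I=(\mu_0,\rho')}a_I\,e^{\sigma_I}$. A monomial $I$ with $\sigma_I:=\sum_j\nu_{i_j}$ satisfying $(\sigma_I,\rho')>(\mu_0,\rho')$ contributes lower-order terms $\sum_{\gamma<\sigma_I}c_{I,\gamma}\chi(L(\gamma))$, and some of these $\gamma$ may equal $\mu_0$; those contributions pollute the coefficient you are trying to isolate. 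Nothing in your setup rules out such $I$, nor can you delete them: the top-degree cancellation $\sum_{I:\sigma_I=\gamma}a_I=0$ (for each maximal $\gamma$) only kills the leading symbols, not the individual terms. A concrete model of the failure: in $\mc[x,y]$ with the degree filtration, $x$ lies in the subalgebra generated by $g_1=x+y^2$ and $g_2=y$, yet the symbol $[x]$ does not lie in the subalgebra generated by $[g_1]=y^2$ and $[g_2]=y$.

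The paper's route avoids this by working inside the polynomial ring $R(\sg)=\mc(q)[z_1,\dots,z_n]$ with the explicit monomials $x_i:=\prod_j z_j^{b_{ij}}$ (where $\mu_i=\sum_j b_{ij}\lambda_j$). It first shows that $Im(\theta)$ coincides with the monoid algebra $A$ spanned by $\{z^\lambda:\lambda\in\Psi\}$, so that $\{x_i\}$ is a minimal generating set for the transparent reason that $\mu_i$ is an irreducible of the monoid $\Psi$. The triangular relations $y_i\in x_i+\mz[x_{i+1},\dots,x_m]$ and their inverses then transfer minimality to $\{y_i\}$. If you want to complete your own approach rather than switch to the paper's, the cleanest fix is to pass to the augmentation quotient: with $A^+=\mathrm{span}\{z^\lambda:\lambda\in\Psi\setminus\{0\}\}$, the space $A^+/(A^+)^2$ has basis $\{[z^{\mu}]:\mu\in\Psi_{\rm min}\}$, and since $y_i-z^{\mu_i}$ lies in $A^+$ with strictly smaller $(\cdot,\rho')$-degree, the classes $[y_i^+]$ are related to $[z^{\mu_i}]$ by a unitriangular matrix, hence also form a basis; then no proper subset of $\{y_i\}$ can generate.
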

\begin{proof} %For convenience, set $z_i=\chi(L(\lambda_i))$.
For each $\mu_i=\sum_{j=1}^nb_{ij}\lambda_j\in\Psi_{\rm min}$,  we let $y_i=\chi(L(\mu_i))$ and $x_i=\prod_{j=1}^nz_j^{b_{ij}}$.

By Lemma 3.3, Eq. (3.2) and the definition of $\Psi_{\rm min}$, the subalgebra $Im(\theta)$ is  generated by the elements $\{y_1,\cdots, y_m\}$. % By Lemma 3.2, $y_i\in Im(\theta)$, we have that $A$ is a subalgebra of $Im(\theta)$.

If $\lambda=\sum_{i=1}^na_i\lambda_i\in \Psi$, then by Eq. (3.2)  the following holds:
\beq\label{induction} \prod_{i=1}^nz_i^{a_i}&=&\chi(L(\lambda))+\sum_{\gamma<\lambda}d^\gamma_{\lambda}(a_1,\cdots,a_n)\chi(L(\gamma)),\eeq
%for some non-negative integers
where $d^\gamma_{\lambda}(a_1,\cdots,a_n)$ is the multiplicity of $L(\gamma)$ appeared in the decomposition of the tensor product
\beqs L(\lambda_1)^{\otimes a_1}\otimes\cdots\otimes L(\lambda_n)^{\otimes a_n}.\eeqs
In particular, if $d^\gamma_{\lambda}(a_1,\cdots,a_n)\not=0$, then $\lambda-\gamma\in Q$ and $\gamma\in\Psi$. Therefore $\chi(L(\gamma))$  belongs to $Im(\theta)$ by Lemma 3.3. It follows that $\prod_{i=1}^nz_i^{a_i}\in Im(\theta)$.

%Because $K:=|\Psi_{\rm min}|$ is finite, we have an arrangement $\{\mu_1,\cdots,\mu_m\}$ of $\Psi_{\rm min}$ such that $i<j$ for all $\mu_i>\mu_j$.
%For all $\mu_i=\sum_{j=1}^nb_{ij}\lambda_j$, let $y_i=\chi(L(\mu_i))$ and $x_i=\prod_{j=1}^nz_j^{b_{ij}}$.

By the definition of $x_i$, the algebra $A$ generated by $x_1,\cdots, x_m$ is isomorphic to the monoid algebra, where the monoid is generated by $\mu_1,\cdots,\mu_m$ in $\Psi$.
By the definition of $\Psi_{\rm min}$, %the property of $\mu_i$,
the monomial $x_i$ can not be algebraically represented by other monomials $x_j(j\not=i)$. It follows that $\{x_1,\cdots, x_m\}$ is a minimal generating set of $A$.

Furthermore, Eq. (\ref{induction}) implies the following:
\beqs\left.\begin{array}{lcl} x_1&\in& y_1+\mz[y_2,\cdots,y_m],\\
x_2&\in& y_2+\mz[y_3,\cdots,y_m],\\
\cdots&&\cdots\\
x_{m-1}&\in& y_{m-1}+\mz[y_m],\\
x_m&\in& y_m+\mz,\end{array}\right\}&\Rightarrow&\left\{\begin{array}{lcl} y_1&\in& x_1+\mz[x_2,\cdots,x_m],\\
y_2&\in& x_2+\mz[x_3,\cdots,x_m],\\
\cdots&&\cdots\\
y_{m-1}&\in& x_{m-1}+\mz[x_m],\\
y_m&\in& x_m+\mz.\end{array}\right.\eeqs
So the $\mc(q)$-algebra generated by $y_1,\cdots,y_m$ is the algebra $A$, and $y_1,\cdots, y_m$ is also a minimal generating set of $A$.  This completes the proof.
\end{proof}

%\section{Proof of Theorem 1.1, Theorem 1.2  and  Theorem 1.3}
\begin{theo}
 The center $Z(U_q(\sg))$ is isomorphic to $\mc(q)[\Psi]$, where $\mc(q)[\Psi]$ is the algebra with a basis indexed by $\Psi$ and its multiplication is determined by the operation of $\Psi.$  \end{theo}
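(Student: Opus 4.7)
The plan is to chain together the isomorphisms already established and then transport the monoid structure of $\Psi$ across the Chevalley--Shephard--Todd identification of $R(\sg)$ with a polynomial ring. By Theorem 2.1 and Lemma 3.3 we already have $Z(U_q(\sg)) \cong (U_{ev}^0)^W \cong Im(\theta)$, where $Im(\theta)$ is the $\mc(q)$-linear span of $\{\chi(L(\lambda)) : \lambda \in \Psi\}$ inside $R(\sg)$. It therefore suffices to construct an algebra isomorphism $Im(\theta) \cong \mc(q)[\Psi]$.

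The key observation is that $\Psi = \{\mu \in \Lambda^+ : 2\mu \in Q\}$ is closed under addition, since $2(\mu_1+\mu_2) \in Q$ whenever $2\mu_1, 2\mu_2 \in Q$. Hence $\mc(q)[\Psi]$ is naturally a subalgebra of the monoid algebra $\mc(q)[\Lambda^+]$, which, since $\Lambda^+$ is the free commutative monoid on $\lambda_1,\ldots,\lambda_n$, is simply the polynomial ring on the symbols $[\lambda_1],\ldots,[\lambda_n]$. By Lemma 3.2, $R(\sg)=\mc(q)[z_1,\ldots,z_n]$ is likewise a polynomial ring in $z_i = \chi(L(\lambda_i))$, so the assignment $[\lambda_i]\mapsto z_i$ extends to an algebra isomorphism $\eta:\mc(q)[\Lambda^+]\xrightarrow{\sim}R(\sg)$.

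Next I would restrict $\eta$ to $\mc(q)[\Psi]$ and show that its image coincides with $Im(\theta)$. For $\lambda = \sum_i a_i\lambda_i \in \Psi$, Equation~(\ref{induction}) yields
\[
\eta([\lambda]) \;=\; \prod_i z_i^{a_i} \;=\; \chi(L(\lambda)) + \sum_{\gamma<\lambda} d^\gamma_\lambda(a_1,\ldots,a_n)\,\chi(L(\gamma)).
\]
Whenever $d^\gamma_\lambda\neq 0$ one has $\gamma\in\Lambda^+$ and $\gamma-\lambda\in Q$, so $2\gamma = 2\lambda + 2(\gamma-\lambda) \in Q$, i.e.\ $\gamma\in\Psi$; thus $\eta(\mc(q)[\Psi])\subseteq Im(\theta)$. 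Conversely, rewriting the triangular relation as
\[
\chi(L(\lambda)) \;=\; \eta([\lambda]) \;-\; \sum_{\gamma<\lambda,\,\gamma\in\Psi} d^\gamma_\lambda\,\chi(L(\gamma))
\]
and inducting on the square length $(\lambda,\lambda)$ gives $\chi(L(\lambda)) \in \eta(\mc(q)[\Psi])$ for every $\lambda\in\Psi$, so $Im(\theta)\subseteq\eta(\mc(q)[\Psi])$. Since $\eta$ is injective, its restriction is an algebra isomorphism $\mc(q)[\Psi] \xrightarrow{\sim} Im(\theta)$, and composing with Theorem 2.1 and Lemma 3.3 produces the desired isomorphism $Z(U_q(\sg)) \cong \mc(q)[\Psi]$.

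The only nontrivial technical ingredient is the closure property that all weights $\gamma$ appearing in the triangular expansion of $\prod_i z_i^{a_i}$ remain in $\Psi$ whenever $\lambda\in\Psi$; this is precisely the observation $\Lambda^+\cap(\lambda+Q)\subseteq\Psi$, which parallels the inductive argument already carried out in the proof of Lemma 3.3, so no new ingredient beyond what is at hand is required.
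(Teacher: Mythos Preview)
Your proof is correct and follows essentially the same line as the paper: both identify $Im(\theta)$ with the $\mc(q)$-span of the monomials $\{z_\lambda=\prod_i z_i^{a_i}:\lambda\in\Psi\}$ inside the polynomial ring $R(\sg)=\mc(q)[z_1,\dots,z_n]$, and then read off the monoid-algebra structure from $z_\lambda z_\mu=z_{\lambda+\mu}$. The only organizational difference is that the paper reaches this monomial basis by first invoking Lemma~3.6 (so that $Im(\theta)$ is the subalgebra generated by the monomials $x_i$ indexed by $\Psi_{\min}$, whence a monomial $z_\lambda$ lies in $Im(\theta)$ iff $\lambda\in\Psi$), whereas you bypass $\Psi_{\min}$ entirely and obtain both inclusions directly from the triangular relation~(\ref{induction}) together with the closure property $\Lambda^+\cap(\lambda+Q)\subseteq\Psi$; your route is a bit more streamlined but uses the same ingredients.
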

 \begin{proof}
 Following the proof of Proposition 3.6, we know that the center $Z(U_q)$ is isomorphic to the $\mc(q)$-algebra $A$ generated by the elements $x_1,\cdots, x_m$. In particular, $\{x_1,\cdots, x_m\}\subset r(\sg)$ is  a  minimal generating set of $Im(\theta)$. %However, by definition, the unique monoid generated by $x_1,\cdots,x_m$  is isomorphic to $G$. Then Theorem 1.1 holds.

For all nonnegative integers $b_1,\cdots, b_m$, the product $x_1^{b_1}\cdots x_m^{b_m}$ is a monomial in variables $z_1,\cdots, z_n$.  We define  $z_\lambda=\prod_{i=1}^nz_i^{a_i}$, for each $\lambda=\sum_{i=1}^na_i\lambda_i\in\Lambda^+$.

By definition, every monomial $z_\lambda\in Im(\theta)$ must be a monomial in variable $x_i$. Thus,  $z_\lambda\in Im(\theta)$  if and only if $z_\lambda=x_1^{b_1}\cdots x_m^{b_m}$ for some nonnegative integers $b_1,\cdots, b_m$. This implies that $$\sum_{i=1}^na_i\lambda_i=\sum_{i=1}^mb_j\mu_j\in\frac{Q}2,$$ or equivalently, $\sum_{i=1}^n2a_i\lambda_i\in Q$. It follows that  $Im(\theta)$ has a basis:
$${\frak {B}}=\left\{z_\lambda \Big| \lambda\in \Psi \right\}.$$%$${\frak {B}}=\left\{z_\lambda \Big| \lambda\in  \frac{Q}2\cap \Lambda\right\}.$$

%Then $z_\lambda\in Im(\theta)$ if and only if $z_\lambda=z_1^{b_1}\cdots z_m^{b_m}$ for some nonnegative integers $b_1,\cdots, b_m$, this implies that $\sum_{i=1}^na_i\lambda_i\in\frac{Q}2$, or equivalently, $\sum_{i=1}^n2a_i\lambda_i\in Q$. So $Im(\theta)$ has a basis
%$${\frak {B}}=\left\{z_\lambda \Big| \lambda\in G \right\}.$$%$${\frak {B}}=\left\{z_\lambda \Big| \lambda\in  \frac{Q}2\cap \Lambda\right\}.$$

Now $0\in \Psi$ and $z_\lambda z_\mu=z_{\lambda+\mu}$ for all $\lambda,\mu\in \Psi$.  So ${\frak{B}}$ is a multiplicative monoid and   the map
$${\frak{B}}\rightarrow \Psi, z_\lambda\mapsto \lambda$$
is  an isomorphism of monoids. It follows that $ Z(U_q)\cong Im(\theta)=\mc(q)[\frak{B}]\cong\mc(q)[\Psi]$,  and hence this theorem holds.
\end{proof}

Now we can determine for which type of Lie algebra $\sg$ the center $Z(U_q(\sg))$ is a polynomial algebra.

\begin{theo} The center $Z(U_q(\sg))$ is isomorphic to a polynomial algebra if and only if $\sg$ is of type $A_1, B_n, C_n, D_{2k+2}, E_7, E_8, F_4$ or $G_2.$ \end{theo}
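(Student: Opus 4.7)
By Theorem 3.7, $Z(U_q(\sg))$ is isomorphic to the monoid algebra $\mc(q)[\Psi]$, so the question reduces to classifying those types for which $\mc(q)[\Psi]$ is a polynomial algebra. The plan is to compare two numerical invariants whose common value must equal $k$ whenever $\mc(q)[\Psi] \cong \mc(q)[x_1,\ldots,x_k]$. The first invariant is the minimal number of $\mc(q)$-algebra generators: by Lemma 3.6 this equals $m = |\Psi_{\min}|$, and for a polynomial algebra in $k$ variables the same invariant equals $k$ (computable as $\dim_{\mc(q)} \mathfrak{m}/\mathfrak{m}^2$ for the augmentation ideal, where each $e^{\mu}$ with $\mu \in \Psi_{\min}$ contributes an independent class and all other non-zero $e^{\mu}$ already lie in $\mathfrak{m}^2$). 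The second invariant is the transcendence degree over $\mc(q)$: the embedding $\mc(q)[\Psi] \hookrightarrow \mc(q)[\Lambda]$ into a Laurent polynomial ring in $n$ variables shows this degree is at most $n$, while $\mc(q)[x_1,\ldots,x_k]$ has transcendence degree exactly $k$.

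For the only-if direction, I would assume $\mc(q)[\Psi] \cong \mc(q)[x_1,\ldots,x_k]$ and deduce $k = m$ from the first invariant and $k \leq n$ from the second, hence $m \leq n$. Parts (ii)--(iv) of Lemma 3.5 yield $m \geq n+1$ in the types $A_n$ $(n \geq 2)$, $D_n$ with $n$ odd, and $E_6$, directly contradicting $m \leq n$. Therefore $Z(U_q(\sg))$ fails to be polynomial in precisely these types.

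For the if direction, I would show in each remaining type that $\Psi$ is the free commutative monoid on $\lambda_1,\ldots,\lambda_n$. In types $A_1, B_n, C_n, D_{2k+2}, E_7$, Proposition 2.2 gives $Q \cap 2\Lambda = 2\Lambda$, hence $\Psi_{ev} = 2\Lambda^+$ and $\Psi = \Lambda^+$; in types $E_8, F_4, G_2$, one has $Q = \Lambda$, so once again $\Psi = \Lambda^+$. In all eight families $\Psi = \mN\lambda_1 \oplus \cdots \oplus \mN\lambda_n$ is free on the $\mz$-linearly independent fundamental weights, so $\mc(q)[\Psi] \cong \mc(q)[e^{\lambda_1},\ldots,e^{\lambda_n}]$ is a genuine polynomial algebra in $n$ variables. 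I expect no serious obstacle here: the technical heart of the classification has already been absorbed into the type-by-type computation of $\Psi_{\min}$ in Lemma 3.5, and the present theorem follows as a clean corollary of the generator-versus-transcendence-degree comparison.
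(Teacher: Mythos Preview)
Your argument is correct and closely parallels the paper's, with one genuine simplification worth noting. Both proofs reduce, via Theorem~3.7, to deciding when $\mc(q)[\Psi]$ is polynomial, and both invoke Lemma~3.5 for the \emph{only-if} direction: the paper observes that the concrete monomial generators $x_1,\dots,x_m$ of $Im(\theta)\subset\mc(q)[z_1,\dots,z_n]$ can be algebraically independent only when $m=n$, while you phrase the same bound abstractly via transcendence degree together with Lemma~3.6. These are the same idea in different clothing. Where you diverge is in the \emph{if} direction: the paper again appeals to Lemma~3.5(i) to get $\Psi_{\min}=\{\lambda_1,\dots,\lambda_n\}$ and hence $x_i=z_i$, whereas you go straight to Proposition~2.2 to conclude $Q\cap 2\Lambda=2\Lambda$ in each of the listed types, so that $\Psi=\Lambda^+$ is visibly the free commutative monoid on the fundamental weights. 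Your route here is shorter and avoids the representation-ring embedding $\theta$ and the character machinery behind Lemma~3.6 altogether; the paper's route has the virtue of uniformity, treating both implications through the single numerical criterion $m=n$. One small cosmetic point: in your parenthetical about $\mathfrak m/\mathfrak m^2$, the independent classes are those of $e^{\mu}-1$ (not $e^{\mu}$ itself), but since you already cite Lemma~3.6 for the minimal-generator count this does not affect the logic.
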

\begin{proof}
We also use the symbol $x_i=\prod_{j=1}^nz_j^{b_{ij}}$ for each $\mu_i=\sum_{j=1}^nb_{ij}\lambda_j\in\Psi_{\rm min}$. By Theorem 3.7, the center $Z(U_q(\sg))$ is isomorphic to the algebra $\mc(q)[\Psi]$. However, the algebra $\mc(q)[\Psi]$ (and  $\mc(q)[\frak{B}]$ ) is a polynomial algebra if and only if the generators $x_i$ are algebraically $\mc(q)$-independent, that is, no polynomial $f(t_1,\cdots,t_m)\in\mc(q)[t_1,\cdots,t_m]\setminus\{0\}$ exists such that $f(x_1,\cdots,x_m)=0$.  By the definition of $x_i$, it follows that $Z(U_q(\sg))$ is a polynomia algebra  if and only if $|\Psi_{\min}|=n$.  By Lemma 3.4, we obtain that $Z(U_q)$ is a polynomial algebra if and only if  $\sg$ is one of the  types  $A_1, B_n, C_n, D_n, E_7, E_8, F_4$  and  $G_2$, where $n$ is even.
So the theorem holds. Moreover, in these polynomial cases, we have  $Im(\theta)=R(\sg)$.
\end{proof}

%\section{Non-polynomial cases}

 \section{Other cases}

In this section, we shall consider the case when the center $Z(U_q(\sg))$ is not isomorphic to a polynomial algebra. For $\sg$ being of type $D_{n},$  we have

\begin{theo}
If $\sg$ is of type $D_{n}$ with $n$ odd, then $Z(U_q(\sg))$ is isomorphic to the quotient ring $$\mc(q)[t_1,t_2,\cdots,t_{n+1}]/(t_{n-1}t_{n}-t_{n+1}^2).$$
\end{theo}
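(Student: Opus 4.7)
The plan is to combine Theorem 3.7 with an explicit presentation of the abelian monoid $\Psi$ in type $D_n$ with $n$ odd. By Theorem 3.7 we have $Z(U_q(\sg)) \cong \mc(q)[\Psi]$, so the whole task reduces to showing that $\mc(q)[\Psi]$ is isomorphic to the claimed quotient ring. To make $\Psi$ concrete, I would use Proposition 2.2 together with $\alpha^\diamond = \alpha_{n-1} + \alpha_n = -2\lambda_{n-2} + 2\lambda_{n-1} + 2\lambda_n$ to check
\[
\Psi = \Big\{ \sum_{i=1}^{n} c_i \lambda_i \,:\, c_i \in \mz_{\geq 0},\ c_{n-1}\equiv c_n \pmod 2 \Big\}.
\]
The parity condition is exactly what places $2\mu$ in $2Q+\mz\alpha^\diamond$, split according to whether $c_{n-1}, c_n$ are both even (use no copies of $\alpha^\diamond$) or both odd (use one copy). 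In accordance with Lemma 3.5(iii), $\Psi$ is then generated as an additive monoid by
\[
\mu_1 = \lambda_1,\ \ldots,\ \mu_{n-2} = \lambda_{n-2},\ \mu_{n-1} = 2\lambda_{n-1},\ \mu_n = 2\lambda_n,\ \mu_{n+1} = \lambda_{n-1}+\lambda_n.
\]

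Next I will introduce the surjective $\mc(q)$-algebra homomorphism $\phi: \mc(q)[t_1,\ldots,t_{n+1}] \to \mc(q)[\Psi]$ defined by $t_i \mapsto z_{\mu_i}$. The identity $(2\lambda_{n-1}) + (2\lambda_n) = 2(\lambda_{n-1}+\lambda_n)$ inside $\Psi$ forces $\phi(t_{n-1}t_n - t_{n+1}^2) = 0$, so $\phi$ factors through a surjection
\[
\bar\phi:\, \mc(q)[t_1,\ldots,t_{n+1}]/(t_{n-1}t_n - t_{n+1}^2) \longrightarrow \mc(q)[\Psi].
\]
Using the relation $t_{n-1}t_n = t_{n+1}^2$ to successively reduce exponents, every class in the quotient becomes a $\mc(q)$-linear combination of the normal-form monomials
\[
M(\underline a) = t_1^{a_1}\cdots t_{n+1}^{a_{n+1}},\qquad \min(a_{n-1}, a_n) = 0,
\]
so these monomials span the quotient.

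The heart of the argument, and the only real obstacle, is to show that $\bar\phi$ sends the $M(\underline a)$ to distinct basis elements $z_\mu$ of $\mc(q)[\Psi]$, whence $\bar\phi$ is injective. For this I would verify that the map
\[
\underline a \longmapsto \mu(\underline a) = \sum_{i=1}^{n-2} a_i \lambda_i + (2a_{n-1}+a_{n+1})\lambda_{n-1} + (2a_n+a_{n+1})\lambda_n
\]
is a bijection between $\{\underline a \in \mz_{\geq 0}^{n+1} : \min(a_{n-1}, a_n) = 0\}$ and $\Psi$. Given $\mu = \sum c_i\lambda_i \in \Psi$, the parity constraint $c_{n-1}\equiv c_n \pmod 2$ forces a unique preimage: set $a_i = c_i$ for $i \leq n-2$, and when $c_{n-1}\geq c_n$ take $a_n = 0$, $a_{n+1} = c_n$, $a_{n-1} = (c_{n-1}-c_n)/2$, with the symmetric formula in the opposite case. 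This is precisely the toric statement that the single relation $\mu_{n-1}+\mu_n = 2\mu_{n+1}$ exhausts all defining relations of the affine semigroup $\Psi$; once it is verified, $\bar\phi$ is an isomorphism and the theorem follows.
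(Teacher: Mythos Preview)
Your proof is correct and follows essentially the same strategy as the paper: identify the minimal generators of $\Psi$ from Lemma~3.5(iii), observe the single relation $\mu_{n-1}+\mu_n=2\mu_{n+1}$, and conclude that the monoid algebra has the stated presentation. The paper phrases this inside $R(\sg)=\mc(q)[z_1,\ldots,z_n]$ and cites the final isomorphism $\mc(q)[t_{n-1},t_n,t_{n+1}]/(t_{n-1}t_n-t_{n+1}^2)\cong\mc(q)[z_{n-1}^2,z_n^2,z_{n-1}z_n]$ as ``well known,'' whereas you supply it explicitly via the normal-form bijection.
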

\begin{proof} % In a similar way to the proof of  Proposition 3.6, one can construct the idea $I$.  However,  here we can prove  the claim  in a different way, but more clearly.

By Lemma 3.5,  $\Psi_{\rm min}=\{\lambda_1,\cdots,\lambda_{n-2},2\lambda_{n-1},2\lambda_n,\lambda_{n-1}+\lambda_n\}$.
Moreover,  we have \beqs \lambda_{n-1}+\lambda_n&=&\alpha_1+2\alpha_2+\cdots+(n-2)\alpha_{n-2}+\frac{n-1}{2}(\alpha_{n-1}+\alpha_{n})\in Q,\\
2\lambda_{n-1}&=&\alpha_1+2\alpha_2+\cdots+(n-2)\alpha_{n-2}+\frac{n}{2}\alpha_{n-1}+\frac{n-2}{2}\alpha_{n})\not\in Q,\\
2\lambda_{n-1}&=&\alpha_1+2\alpha_2+\cdots+(n-2)\alpha_{n-2}+\frac{n-2}{2}\alpha_{n-1}+\frac{n}{2}\alpha_{n})\not\in Q.\eeqs
By Tabel 1, if $\lambda$ is a dominant weight and $\lambda<\lambda_{n-1}+\lambda_n$, then $\lambda=\lambda_i$ for some $i\leq n-2$ and $i$ is even.
Similarly, if $\lambda$ is a dominant weight and $\lambda<2\lambda_{n-1}$ or $\lambda<2\lambda_n$, then $\lambda=\lambda_i$ for some $i\leq n-2$ and $i$ is odd.
So \beqs\chi(L(\lambda_{n-1}+\lambda_n))&=&z_{n-1}z_n+c_{\lambda_{n-1},\lambda_{n}}^{0}+\sum_{i\leq n-2 {\rm \;is\;even}}c_{\lambda_{n-1},\lambda_n}^{\lambda_i}z_i,\\
\chi(L(2\lambda_{n-1}))&=&z_{n-1}^2+\sum_{i\leq n-2 {\rm \;is\;odd}}c_{\lambda_{n-1},\lambda_{n-1}}^{\lambda_i}z_i,\\
\chi(L(2\lambda_n))&=&z_n^2+\sum_{i\leq n-2 {\rm \;is\;odd}}c_{\lambda_{n},\lambda_n}^{\lambda_i}z_i. \eeqs
These yield that the space spanned by $\{1, \chi(\mu_i)|i=1,\cdots,n+1\}$ coincides with the space spanned by $\{1, z_1,\cdots,z_{n-2}, z_{n-1}^2, z_n^2, z_{n-1}z_n\}$.
Thus,  the map \beqs t_{n-1}\mapsto z_{n-1}^2,\; t_n\mapsto z_n^2,\; t_{n+1}\mapsto z_{n-1}z_n,\; t_i\mapsto z_i, \; 1\leq i\leq n-2,
\eeqs
defines an algebra epimorphism.

Now it suffices to prove the following isomorphism
\beqs \mc(q)[t_{n-1}, t_n, t_{n+1}]/(t_{n-1}t_{n}-t_{n+1}^2)\cong \mc(q)[z_{n-1}^2, z_n^2, z_{n-1}z_n].\eeqs
It is a well-known result.
\end{proof}

\begin{theo}
If $\sg$ is of type $E_6$, then the center $Z(U_q(\sg))$ is isomorphic to the quotient ring ${\frak R}={\mathcal S}/I$, where
${\mathcal S}$ is the polynomial algebra $\mc(q)[t_1,t_2,t_3,t_4,t_5,t_6,t_7,t_8,t_9,t_{10},t_{11},t_{12},t_{13},t_{14}]$ and $I$ is the ${\mathcal S}$-ideal generated by eleements
$$t_1t_3-t_7^3,t_1t_6-t_8^3,t_3t_5-t_9^3,t_8t_9-t_7t_{10}, t_7t_9^2-t_3t_{11}, t_7^2t_9-t_3t_{12}, t_7t_8^2-t_1t_{13}, t_7^2t_8-t_1t_{14}.$$
\end{theo}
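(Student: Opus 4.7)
By Theorem 3.7, $Z(U_q(\sg))\cong \mc(q)[\Psi]$, and Proposition 2.2 gives $\Psi=\Lambda^+\cap Q$ for $E_6$ (since $Q\cap 2\Lambda=2Q$). By Lemma 3.5 the monoid $\Psi$ has $14$ minimal generators, which I label $\mu_1,\ldots,\mu_{14}$ in the order given in the statement of that lemma. The plan is to show that the algebra homomorphism
$$\phi:\mathcal{S}\longrightarrow \mc(q)[\Psi],\qquad \phi(t_i)=\mu_i,$$
is surjective with kernel exactly $I$. Surjectivity is immediate since $\Psi_{\rm min}$ generates $\Psi$ as a monoid. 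The inclusion $I\subseteq\ker\phi$ is a direct check in $\Psi$: for instance $\mu_1+\mu_3=3\lambda_1+3\lambda_3=3\mu_7$ gives $t_1t_3-t_7^3\in\ker\phi$, and $\mu_8+\mu_9=(\lambda_1+\lambda_6)+(\lambda_3+\lambda_5)=\mu_7+\mu_{10}$ gives $t_8t_9-t_7t_{10}\in\ker\phi$; the other six equalities in $\Psi$ are checked the same way. Hence $\phi$ descends to a surjection $\bar\phi:\mathcal{S}/I\twoheadrightarrow\mc(q)[\Psi]$.

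The central step is to show $\bar\phi$ is injective, equivalently, that the eight listed relations generate the entire congruence kernel of the monoid surjection $\mN^{14}\twoheadrightarrow\Psi$. My plan is to construct an explicit normal form for monomials in $\mathcal{S}/I$. The first three relations allow any monomial to be rewritten so that at most one of each pair $\{t_1,t_3\}$, $\{t_1,t_6\}$, $\{t_3,t_5\}$ has positive exponent. Relations $(4)$--$(8)$ are then used to reduce products of the mixed variables $t_7,t_8,t_9,t_{10}$ against the coupled variables $t_{11},\ldots,t_{14}$; concretely, one chooses a monomial order (e.g.\ lex with $t_{11}>t_{12}>t_{13}>t_{14}>t_{10}>t_9>t_8>t_7>\cdots$) and shows that the eight relations form a Gr\"obner basis of $I$. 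The resulting standard monomials are then enumerated by $\Psi$ via the weight map $\prod t_i^{n_i}\mapsto \sum n_i\mu_i$, giving a $\mc(q)$-basis of $\mathcal{S}/I$ that maps isomorphically onto the standard basis of $\mc(q)[\Psi]$.

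The main obstacle is the Gr\"obner basis check: one must show that every $S$-polynomial between two of the eight generators reduces to zero modulo $I$. The critical overlaps include relations $(5)$ and $(6)$ at $t_7^2t_9^2$, relations $(7)$ and $(8)$ at $t_7^2t_8^2$, and relation $(4)$ with each of $(5)$--$(8)$ via the shared variables $t_7, t_8, t_9$. Resolving these overlaps requires iterative use of relations $(1)$--$(3)$ to re-express the cubic monomials $t_7^3, t_8^3, t_9^3$ in terms of $t_1t_3, t_1t_6, t_3t_5$. For example, the $S$-polynomial of $(5)$ and $(6)$ at $t_7^2t_9^2$ reduces to $t_3(t_9t_{12}-t_7t_{11})$, whose image under $\phi$ vanishes and which must be shown to lie in $I$ by substitution. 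Once confluence of all critical pairs is verified, the bijection between standard monomials and $\Psi$ forces $\bar\phi$ to be injective, yielding the desired isomorphism.
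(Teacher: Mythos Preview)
Your setup of the surjection $\phi:\mathcal S\to\mc(q)[\Psi]$ and the verification $I\subseteq\ker\phi$ match the paper. The divergence is in proving $\ker\phi\subseteq I$: the paper does \emph{not} attempt a Gr\"obner or normal-form computation. Instead it localizes at the multiplicative set $T_1=\mc(q)[t_1,\ldots,t_6]\setminus\{0\}$ and shows that $T_1^{-1}\mathcal S/T_1^{-1}I$ is already a \emph{field}. The point is that once $t_1,\ldots,t_6$ are invertible, relations (4)--(8) let one eliminate $t_{10},\ldots,t_{14}$ in favour of $t_7,t_8,t_9$, and relations (1)--(3) exhibit $t_7,t_8,t_9$ as cube roots $(t_1t_3)^{1/3},(t_1t_6)^{1/3},(t_3t_5)^{1/3}$; a short irreducibility check shows this yields a degree-$27$ field extension of $\mc(q)(t_1,\ldots,t_6)$. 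Since a nonzero homomorphism out of a field is injective, the localized map has kernel exactly $T_1^{-1}I$, and one descends (using that the target is a domain) to $\ker\phi=I$. No S-polynomial bookkeeping is needed.

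Your Gr\"obner approach is in principle legitimate, but as written it has a real gap, and your own example exposes it. You produce the residue $t_3(t_9t_{12}-t_7t_{11})$ and say it ``must be shown to lie in $I$ by substitution.'' But the eight listed relations only give $t_3\cdot(t_9t_{12}-t_7t_{11})\in I$ (multiply (5) by $t_7$, (6) by $t_9$, subtract); getting $t_9t_{12}-t_7t_{11}$ itself into $I$ is equivalent to knowing that $t_3$ is a non-zero-divisor in $\mathcal S/I$, which is precisely what is in question. In fact the eight binomials are not a Gr\"obner basis in any obvious order: many further binomials such as $t_9t_{12}-t_7t_{11}$, $t_8t_{14}-t_7t_{13}$, $t_{10}^3-t_5t_6$ lie in $\ker\phi$, and each would have to be shown to lie in $I$---the same circularity. (A minor inconsistency: with your stated lex order $t_{11}>t_{12}>\cdots>t_7$ the leading terms of (5) and (6) are $t_3t_{11}$ and $t_3t_{12}$, not the monomials overlapping at $t_7^2t_9^2$.) The paper's localization trick is exactly what dissolves this circularity: after inverting $t_3$, the implication $t_3f\in I\Rightarrow f\in T_1^{-1}I$ is free.
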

\begin{proof}
%It requires lot of computation to list the irreducible decomposition of the tensor product of irreducible $E_6$-modules. However,
By Lemma 3.5 and the proof of Lemma 3.6, the algebra $Im(\theta)$ is generated by  the following fourteen elements
\beqs z_1^3, z_2, z_3^3, z_4, z_5^3, z_6^3, z_1z_3, z_1z_6, z_3z_5, z_5z_6, z_1z_5^2, z_1^2z_5, z_3z_6^2, z_3^2z_6.\eeqs
%The algebra $Im(\theta)$ is isomorphic to a quotient algebra of the polynomial ring $\mc(q)[t_1,\cdots,t_{14}]$ since $m=14$. Moreover, the quotient has at least $8$ generating relations because $Im(\theta)$ is a subalgebra of $\mc(q)[z_1,\cdots, z_6]$.
Thus, the map $\phi$ given by
\beqs &&\phi(t_1)=z_1^3, \phi(t_2)=z_2, \phi(t_3)=z_3^3, \phi(t_4)=z_4, \phi(t_5)=z_5^3, \phi(t_6)=z_6^3,\\ &&\phi(t_7)=z_1z_3, \phi(t_8)=z_1z_6, \phi(t_9)=z_3z_5, \phi(t_{10})=z_5z_6,\\
&&\phi(t_{11})=z_1z_5^2, \phi(t_{12})=z_1^2z_5, \phi(t_{13})=z_3z_6^2, \phi(t_{14})=z_3^2z_6\eeqs
defines an algebra epimorphism from ${\mathcal S}$ to $Im(\theta)$ and $I\subseteq Ker\phi$.%defines an algebra epimorphism from ${\frak R}$ to $Im(\theta)$.

 Let $S_1=\mc(q)[t_1,\cdots,t_6]$ and $S_2=\mc(q)[z_1^3, z_2, z_3^3, z_4, z_5^3, z_6^3]$. The map $\phi|_{S_1}$ is an algebra isomorphism from $S_1$ to $S_2$ since $z_1,\cdots, z_6$ are algebraically independent. So $S_1\cap Ker\phi=\{0\}$.

Now let $T_i=S_i\setminus\{0\}$ for $i=1,2$.  It is obvious that $\phi$ can be extended to an algebra epimorphism (denoted  $\overline{\phi}$) from $T_1^{-1}{\mathcal S}$ to $T_2^{-1}Im(\theta)$ as follows:
\beqs \phi(a^{-1}b)=\phi(a)^{-1}\phi(b),\;\forall a\in T_1, b\in{\mathcal S}.\eeqs
In particular, $J:=T_1^{-1}I\subseteq Ker\overline{\phi}$.

By observation, $J\cap{\mathcal S}=I$. %so %${\mathcal S}/I\cong ({\mathcal S}+J)/J\subset T_1^{-1}{\mathcal S}/J$.
If $T_1^{-1}{\mathcal S}/J$ is a field, then $Ker\overline{\phi}=J$, and hence
\beqs Ker\phi\subseteq Ker\overline{\phi}\cap{\mathcal S}=J\cap{\mathcal S}=I,\eeqs
which implies $Ker\phi=I$ and ${\frak R}\cong Im(\theta)\cong Z(U_q(\sg))$.

%Let %${\mathcal S}$ be the polynomial algebra $\mc(q)[t_1,t_2,t_3,t_4,t_5,t_6,t_7,t_8,t_9,t_{10},t_{11},t_{12},t_{13},t_{14}]$ and
%\beqs &&u_1=t_1t_3-t_7^3,\, u_2=t_1t_6-t_8^3,\, u_3=t_3t_5-t_9^3,\, u_4=t_8t_9-t_7t_{10}, \\
%&&u_5= t_7t_9^2-t_3t_{11},\, u_6= t_7^2t_9-t_3t_{12},\, u_7= t_7t_8^2-t_1t_{13},\, u_8= t_7^2t_8-t_1t_{14}.\eeqs
%Then $t_1,t_2,t_3,t_4,t_5,t_6,u_1,u_2,u_3,u_{4},u_{5},u_{6},u_{7},u_{8}$ are algebraically independent in ${\mathcal S}$,
%so ${\mathcal S}_0=\mc(q)[t_1,t_2,t_3,t_4,t_5,t_6,u_1,u_2,u_3,u_{4},u_{5},u_{6},u_{7},u_{8}]\subset{\mathcal S}$ is also a polynomial algebra. Equivalently,
%${\mathcal S}_0$ is a polynomial $S_1$-algebra in variables $u_1,u_2,u_3,u_{4},u_{5},u_{6},u_{7},u_{8}$. Thus we have $S_1\cap I=\{0\}$ by $S_1\cap I\subset{\frak R}_0\cap %I=\sum_{i=1}^8u_i{\frak R}_0$.

%Let $T_i=S_i\setminus\{0\}$. Because $S_1\cap I=\{0\}$, then $T_1$ contains no zero-divisor of ${\frak R}$. Moreover, because the elements $z_1^3, z_2, z_3^3, z_4, z_5^3, z_6^3$ are algebraically independent in $R(\sg)$, we have $T_2$ contains no zero-divisor of $Im(\theta)\subset R(\sg)$. Now we can consider the localizations $T_1^{-1}{\frak R}$ of ${\frak R}$ and  $T_2^{-1}Im(\theta)$ of $Im(\theta)$.

%Then $\phi$ can be extended to an algebra epimorphism (still denoted $\phi$) from $T_1^{-1}{\frak R}$ to $T_2^{-1}Im(\theta)$ by
%\beqs \phi(a^{-1}b)=\phi(a)^{-1}\phi(b),\;\forall a\in T_1, b\in\frak R.\eeqs
%Then  it suffices to prove that ${\rm Ker}\phi=0$.

Next all elements are considered in $T_1^{-1}{\mathcal S}/J$, and we prove that the quotient ring is a field.

Let $F_0=\mc(q)(t_1,\cdots,t_6)=T_1^{-1}S_1$, the fraction field of $S_1$. Then $F_1:=F_0[t_7]$ is isomorphic to $F_0[t]/(t^3-t_1t_3)$, a field extension of $F_0$, since $t^3-t_1t_3$ is irreducible in $F_0[t]$. Specially, we may write $F_1=F_0[(t_1t_3)^\frac13]$.

%In the following,  we let $F[t]/\langle f(t)\}$ denote the splitting field of $F$ over a polynomial $f(t)$ (to avoid confusion with quotient rings).
Similarly, we have %Now let
\beqs &&F_2:=F_1[t_8]=F_0[(t_1t_3)^\frac13, t_8]=F_0[(t_1t_3)^\frac13, (t_1t_6)^\frac13],\\
&&F_3:=F_2[t_9]=F_0[(t_1t_3)^\frac13, (t_1t_6)^\frac13,t_9]=F_0[(t_1t_3)^\frac13, (t_1t_6)^\frac13, (t_3t_5)^\frac13],\eeqs
both of $F_2$ and $F_3$ are fields since $t^3-t_1t_6$ is irreducible in $F_1[t]$ and $t^3-t_3t_5$ is irreducible in $F_2[t]$.  Moreover,
\beqs t_{10}=t_7^{-1}t_8t_9,\; t_{11}=t_3^{-1}t_7t_9^2,\; t_{12}=t_3^{-1}t_7^2t_9,\; t_{13}=t_1^{-1}t_7t_8^2,\; t_{14}=t_1^{-1}t_7^2t_8.\eeqs
%\beqs&& t_{10}=(t_5t_6)^\frac13=(t_1t_3)^{-\frac13}(t_1t_6)^\frac13(t_3t_5)^\frac13,\\
%&&(t_1t_5^2)^{1/3}=t_3^{-1}(t_1t_3)^{1/3}((t_3t_5)^{1/3})^2,\; (t_1^2t_5)^{1/3}=t_3^{-1}((t_1t_3)^{1/3})^2(t_3t_5)^{1/3},\\
%&&(t_3t_6^2)^{1/3}=t_1^{-1}(t_1t_3)^{1/3}((t_1t_6)^{1/3})^2,\; (t_3^2t_6)^{1/3}=t_1^{-1}((t_1t_3)^{1/3})^2(t_1t_6)^{1/3}.\eeqs
It follows that $T_1^{-1}{\mathcal S}/J=F_3$ is a field.
The proof is completed.
%The fact that $\phi\not=0$ implies that ${\rm Ker}\phi=0$.
%Then $\phi$ has to be an isomorphism since $\phi\not=0$.
\end{proof}

In the following, we assume that $\sg$  is the simple Lie algebra of type $A_n$ with $n\geq 2$. In this case, $\alpha_i=-\lambda_{i-1}+2\lambda_i-\lambda_{i+1}$, where  $\lambda_0=\lambda_{n+1}=0$.
We also use the notation $r:=\frac{n+1}{\gcd(n+1,2)}$.

As free abelian groups, there exists an isomorphism $\xi:\Lambda\rightarrow\mz^{n}$ defined by
\beqs \sum_{i=1}^nt_i\lambda_i\mapsto (t_1,\cdots, t_n),\eeqs
where the sum of $n$-tuples ${\bf t}$ and ${\bf s}$ is defined by ${\bf t}+{\bf s}=(t_1+s_1,\cdots, t_n+s_n)$.  For every $n$-tuple ${\bf t}=(t_1,t_2,\cdots,t_n)\in\mz^n$, we define $|{\bf t}|=t_1+2t_2+\cdots+nt_n$.

%Define a group homomorphism $\Theta:\Lambda\rightarrow\mz$ by
%\beqs \Theta: \sum_{i=1}^nt_i\lambda_i\mapsto\sum_{i=1}^nit_i.\eeqs
%Then for all $\lambda\in\Lambda^+$, we have $\Theta(\lambda)=|\xi(\lambda)|$.

\begin{lemma}
Let $\lambda\in\Lambda$. Then $\lambda\in Q/2$ if and only if $|\xi(\lambda)|\in r\mz$.%For all $\lambda\in\Lambda$, $\lambda\in Q$ if and only if $\Theta(\lambda)\in(n+1)\mz$.
\end{lemma}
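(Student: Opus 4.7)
The plan is to work in the quotient $\Lambda/Q$ and reduce the condition $2\lambda\in Q$ to a divisibility on $|\xi(\lambda)|$. First I would establish the congruence
\[ \lambda_i \equiv i\lambda_1 \pmod{Q}, \qquad 1\leq i\leq n, \]
by induction on $i$, using the relation $\alpha_i=-\lambda_{i-1}+2\lambda_i-\lambda_{i+1}$ stated just before the lemma. The base case $i=1$ is trivial (with $\lambda_0=0$), and the relation rewrites as $\lambda_{i+1}\equiv 2\lambda_i-\lambda_{i-1}\pmod{Q}$, which by the induction hypothesis gives $\lambda_{i+1}\equiv(2i-(i-1))\lambda_1=(i+1)\lambda_1\pmod Q$.

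Having that, for any $\lambda=\sum_{i=1}^{n}t_i\lambda_i\in\Lambda$ I would sum the congruences to obtain
\[ \lambda\equiv\Bigl(\sum_{i=1}^n it_i\Bigr)\lambda_1 = |\xi(\lambda)|\,\lambda_1\pmod{Q}. \]
Therefore $2\lambda\in Q$ is equivalent to $2|\xi(\lambda)|\,\lambda_1\in Q$. Next I would show that the image of $\lambda_1$ in $\Lambda/Q$ has order exactly $n+1$: the standard expansion $\lambda_1=\frac{1}{n+1}\sum_{j=1}^{n}(n+1-j)\alpha_j$ yields $(n+1)\lambda_1\in Q$, while the congruences above together with the fact that $\lambda_1,\ldots,\lambda_n$ form a $\mathbb{Z}$-basis of $\Lambda$ rule out any smaller order (a relation $k\lambda_1\in Q$ with $1\leq k\leq n$ would force $\lambda_k\in Q$, contradicting the basis property).

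Combining these steps, $2\lambda\in Q$ is equivalent to $(n+1)\mid 2|\xi(\lambda)|$, which is exactly $\frac{n+1}{\gcd(n+1,2)}\mid |\xi(\lambda)|$, i.e.\ $|\xi(\lambda)|\in r\mathbb{Z}$. No step is a serious obstacle; the only point requiring some care is distinguishing the parities of $n+1$ in the last divisibility translation, which is precisely what the definition of $r$ absorbs.
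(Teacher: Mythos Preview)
Your approach is essentially the paper's own: both arguments reduce $\lambda$ modulo $Q$ to a multiple of $\lambda_1$ via the recursion $\alpha_i=-\lambda_{i-1}+2\lambda_i-\lambda_{i+1}$, and then use the explicit expansion of $\lambda_1$ in the simple roots to decide when that multiple lies in $Q/2$. One small point: the phrase ``contradicting the basis property'' does not by itself rule out $\lambda_k\in Q$---nothing about $\{\lambda_i\}$ being a $\mathbb{Z}$-basis of $\Lambda$ prevents some $\lambda_k$ from lying in the sublattice $Q$. The clean fix is already in your hands: from $\lambda_1=\tfrac{1}{n+1}\sum_{j=1}^{n}(n+1-j)\alpha_j$ and the fact that the $\alpha_j$ are a $\mathbb{Z}$-basis of $Q$, the coefficient of $\alpha_n$ shows $k\lambda_1\in Q$ forces $(n+1)\mid k$, giving the exact order directly.
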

\begin{proof}
% we have $\Theta(\alpha_i)=(n+1)\delta_{i,n}$, then $\Theta(Q)\subseteq (n+1)\mz$.

For any  $\lambda=\sum_{i=1}^nt_i\lambda_i$, we have
\beqs \lambda&=&t_1\lambda_1-\sum_{i=2}^nt_i(\alpha_{i-1}+2\alpha_{i-2}+\cdots+(i-1)\alpha_{1}-i\lambda_1)\\
&=&(\sum_{i=1}^nit_i)\lambda_1-\big(t_n\alpha_{n-1}+(2t_n+t_{n-1})\alpha_{n-2}+\cdots+((n-1)t_n+\cdots+t_{2})\alpha_1\big).\eeqs
Notice that  $r=\frac{n+1}{\gcd(n+1,2)}$ and $\lambda_1=\frac{n}{n+1}\alpha_1+\frac{1}{n+1}(\alpha_n+2\alpha_{n-1}+\cdots+(n-1)\alpha_{2}).$
It is easy to see that $\lambda\in Q/2$ if and only if $(\sum_{i=1}^nit_i)\lambda_1\in Q/2$ if and only if $r|\sum_{i=1}^nit_i$. %By definition, $\Theta(\lambda)=\sum_{i=1}^nit_i$, so we have $\lambda\in Q/2$ if and only if $r|\Theta(\lambda)$, thus,
Thus, the lemma holds.\end{proof}

An $n$-sequence is an $n$-tuple ${\bf t}=(t_1,\cdots,t_n)$ with $t_i\in\mN$.
Then the restriction $\xi_+:=\xi|_{\Lambda^+}$ is an isomorphism between semigroups $\Lambda^+$ and the set of $n$-sequences of type $r$. %So $\Lambda^+$ can be identified as the set of $n$-sequences.

Take $k\in\mz_+$. An $n$-sequence ${\bf t}$ is said to be of type $k$ if $|{\bf t}|$ is a multiple of $k$.  Let ${\bf 0}$ denote the zero $n$-sequence $(0,\cdots,0)$.  An $n$-sequence ${\bf t}$ of type $k$ is called minimal if ${\bf t}\not={\bf 0}$ and it is  not a sum of two non-zero $k$-type $n$-sequences.

\begin{theo} Let $\sg$ be of type $A_n$ with $n\geq 2$.  Then the  cardinality of $\Psi_{\rm min}$ equals the number of minimal $n$-sequences of type  $r$, where $\Psi_{\rm min}=\{\lambda\in\Psi\setminus\{0\}\,|\, \lambda\not=\mu_1+\mu_2,\forall \mu_1, \mu_2\in\Psi\setminus\{0\}\}$. %(ii)
\end{theo}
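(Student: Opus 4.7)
The plan is to exhibit an explicit bijection from $\Psi_{\rm min}$ to the set of minimal $n$-sequences of type $r$; the natural candidate is the restriction of $\xi_+$ defined just before Lemma 4.3. The task then splits into three pieces: (i) verify that $\xi_+$ carries $\Psi$ bijectively onto the set of $n$-sequences of type $r$; (ii) observe that $\xi_+$ intertwines the additive structure and sends $0$ to ${\bf 0}$; (iii) conclude that this monoid isomorphism identifies $\Psi_{\rm min}$ with the set of minimal $n$-sequences of type $r$.

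For (i), note that $\Psi = \{\lambda/2 \mid \lambda \in Q \cap 2\Lambda^+\}$ is precisely $\Lambda^+ \cap (Q/2)$, so $\Psi$ sits inside $\Lambda^+$. Since $\xi_+ : \Lambda^+ \to \mN^n$ is already a bijection onto the set of all $n$-sequences, and since Lemma 4.3 characterizes membership in $Q/2$ by the congruence $|\xi(\lambda)| \in r\mz$, the restriction of $\xi_+$ to $\Psi$ is a bijection onto $\{{\bf t} \in \mN^n \mid |{\bf t}| \in r\mz\}$, which is by definition the set of $n$-sequences of type $r$.

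For (ii), $\xi$ is a $\mz$-module isomorphism $\Lambda \to \mz^n$, so $\xi(\lambda + \mu) = \xi(\lambda) + \xi(\mu)$ and $\xi(0) = {\bf 0}$; these identities restrict to $\Psi$ since the sum of two elements of $\Psi$ again lies in $\Lambda^+ \cap (Q/2) = \Psi$ (both $\Lambda^+$ and $Q/2$ being closed under addition). Thus $\xi_+$ gives an isomorphism of additive monoids between $\Psi$ and the monoid of $n$-sequences of type $r$.

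For (iii), an element $\lambda \in \Psi \setminus \{0\}$ fails to belong to $\Psi_{\rm min}$ precisely when $\lambda = \mu_1 + \mu_2$ with $\mu_1, \mu_2 \in \Psi \setminus \{0\}$. Applying $\xi_+$ and using (ii), this happens exactly when $\xi(\lambda) = \xi(\mu_1) + \xi(\mu_2)$ is a sum of two nonzero $n$-sequences of type $r$; that is, exactly when $\xi(\lambda)$ is \emph{not} a minimal $n$-sequence of type $r$. Taking contrapositives, $\xi_+$ restricts to a bijection $\Psi_{\rm min} \to \{\text{minimal } n\text{-sequences of type } r\}$, and the cardinalities agree. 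There is no serious obstacle: the entire argument is a transfer of structure through $\xi_+$, with the only substantive input being Lemma 4.3 and the closure of $\Psi$ under addition noted above.
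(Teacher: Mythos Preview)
Your proof is correct and follows essentially the same approach as the paper: both arguments use the restriction of $\xi$ to show that $\Psi$ corresponds bijectively (as an additive monoid, via Lemma 4.3) to the set of $n$-sequences of type $r$, and then observe that this bijection matches the two notions of minimality. Your version is somewhat more explicit in separating out the monoid-isomorphism step, but the content is the same.
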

\begin{proof}
We claim that $\xi$ defines a bijection between $\Psi_{\min}$ and the set of minimal $n$-sequences of type $r$. In fact, if $\lambda\in\Psi_{\min}$, then $\xi(\lambda)$ is minimal. Otherwise, we may assume that $\xi(\lambda)={\bf t}'+{\bf t}''$ such that both ${\bf t}'$ and ${\bf t}''$ are of type $r$ and non-zero. Then
$\xi^{-1}({\bf t}')$ and $\xi^{-1}({\bf t}'')$ are non-zero dominant weights, and $\lambda=\xi^{-1}({\bf t}')+\xi^{-1}({\bf t}'')\not\in\Psi_{\min}$, a contradiction.

Conversely, let ${\bf t}$ be of type $r$. We show that  $\xi^{-1}({\bf t})\in\Psi_{\min}$ if ${\bf t}$ is minimal.  Suppose that $\xi^{-1}({\bf t})=\mu'+\mu''$ for some dominant weights $\mu', \mu''\in\Psi\setminus\{0\}$. Then ${\bf t}=\xi(\mu')+\xi(\mu'')$ is not minimal, a contradiction!
Since $\Psi_{\min}$ is finite, the theorem holds.
\end{proof}

\begin{lemma}
For all $k\not=1,r$, we have $d_k\lambda_1+\lambda_k\in\Psi_{\min}$, where
$d_k=2r-k$ if $r<k<n+1$ and $d_k=r-k$ for other cases.
\end{lemma}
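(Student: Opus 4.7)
The plan is to first verify that $\lambda := d_k\lambda_1 + \lambda_k$ lies in $\Psi$, and then to establish minimality by observing that the $\lambda_1$-coefficient $d_k$ is strictly smaller than $r$ in both defining subcases; this makes it impossible to split off any nonzero summand of $\Psi$ from $\lambda$.

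First I would compute $\xi(\lambda) = (d_k, 0, \ldots, 0, 1, 0, \ldots, 0)$ (with the $1$ in position $k$), so that $|\xi(\lambda)| = d_k + k$. In the ``other cases'' (so $k < r$, since $k \neq 1, r$), one has $d_k + k = (r-k) + k = r$, while in the range $r < k < n+1$ (which forces $n$ odd and $r = (n+1)/2$) one has $d_k + k = (2r-k) + k = 2r$. In both cases $|\xi(\lambda)| \in r\mathbb{Z}$, so Lemma 4.3 gives $\lambda \in Q/2$; since all coordinates of $\xi(\lambda)$ are nonnegative, $\lambda \in \Lambda^+$ and therefore $\lambda \in \Psi$.

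For minimality, suppose $\lambda = \mu_1 + \mu_2$ with $\mu_1,\mu_2 \in \Psi\setminus\{0\}$. Writing $\mu_i = \sum_j a_j^{(i)}\lambda_j$ with nonnegative integer coefficients, comparison with $\lambda = d_k\lambda_1 + \lambda_k$ forces $a_j^{(i)} = 0$ for all $j \notin \{1,k\}$, and the condition $a_k^{(1)} + a_k^{(2)} = 1$ lets us assume (up to relabelling) that $\mu_1 = a\lambda_1 + \lambda_k$ and $\mu_2 = b\lambda_1$ with $a + b = d_k$ and $a,b \geq 0$. Since $\mu_2 \neq 0$ we have $b \geq 1$, and $\mu_2 \in \Psi$ together with Lemma 4.3 gives $b = |\xi(\mu_2)| \in r\mathbb{Z}$; hence $b \geq r$. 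On the other hand, the explicit formulas yield $d_k < r$ in every admissible case: in the ``other cases'' $d_k = r - k \leq r - 2 < r$, and in the range $r < k < n+1$ we have $d_k = 2r - k < r$. Therefore $b \leq a + b = d_k < r$, contradicting $b \geq r$. Hence no decomposition $\lambda = \mu_1 + \mu_2$ exists, and $\lambda \in \Psi_{\min}$.

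The argument is essentially just a congruence/inequality check, so I do not expect any serious obstacle. The one place requiring care is the case analysis itself: one must notice that $r < k$ can occur only when $n$ is odd (otherwise $r = n+1 > k$), so that the two formulas for $d_k$ together exhaust all admissible $k \neq 1, r$, and that in each such case the uniform inequality $d_k < r$ holds, which is what makes the minimality argument work in one stroke.
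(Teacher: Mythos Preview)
Your proof is correct and follows essentially the same approach as the paper: both use Lemma~4.3 to check that $d_k+k\in r\mathbb{Z}$, and both rely on $d_k$ being the minimal positive integer with this property. Your argument is in fact more complete than the paper's, which asserts that minimality of $d_k$ among positive integers with $r\mid(d_k+k)$ yields $d_k\lambda_1+\lambda_k\in\Psi_{\min}$ without spelling out the decomposition argument; you make this step explicit via the inequality $b\le d_k<r$ contradicting $b\ge r$.
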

\begin{proof}
By Lemma 4.3, we have that $d_k\lambda_1+\lambda_k\in\Psi$ implies that $r|(d_k+k)$. Let $d_k\lambda_1+\lambda_k\in\Psi_{\min}$. Then $d_k$ is the minimal positive integer such that $d_k\lambda_1+\lambda_k\in\Psi$.

If $k<r$, then $d_k=r-k$, because $d_k+k=r$. If $r<k$, then $r<k<n+1$ and $d_k=2r-k$ since $r<d_k+k=2r$.
Thus, the lemma holds.\end{proof}

Let  ${\bf e}(k)=\xi(d_k\lambda_1+\lambda_k)$ for all $k\not=1,r$.  These $n$-sequences of type $r$ are called {\bf special}. An $n$-sequence ${\bf t}$ is called {\bf single} if only one $t_i\not=0$ for some $1\leq i\leq n$. Let $r_k=\frac{r}{(r,k)}$. Then by the proof of Lemma 3.5(iv) and Theorem 4.4, $\{\xi(r_k\lambda_k)| 1\leq k\leq n\}$ is the set of  all single minimal $n$-sequences of type $r$.

Let $\mathcal{S}_n^r$ denote the set of all $n$-sequences of type $r$  and set
\beqs T=\{{\bf t}\in \mathcal{S}_n^r \mid  {\bf t}\hbox{\, is\, minimal,\, neither\, single\, and\, nor\, special}\}.\eeqs
If $r<n+1$, then $r=\frac{n+1}{2}$ and
$$\lambda_r=\frac{1}{2}(\alpha_1+2\alpha_2+\cdots+(r-1)\alpha_{r-1}+r\alpha_r+(r+1)\alpha_{r+1}+\cdots+2\alpha_{n-1}+\alpha_n)\in Q/2.$$
So $\lambda_r\in\Psi_{\min}$ and every  other $\lambda\in\Psi_{\min}$ has the form $\lambda=\sum_{k\not=r}t_k\lambda_k$.

\begin{lemma}
For every ${\bf t}=(t_1,\cdots,t_n)\in T$, it holds that
\beqs {\bf t}+\|{\bf t}\|\xi(r\lambda_1)=\sum_{k\not=1,r}t_k{\bf e}(k),
\eeqs
where $\|{\bf t}\|=\frac1{r}(-t_1+\sum_{k\not=1,r}t_kd_k)$ is a positive integer.\end{lemma}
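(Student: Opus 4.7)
The plan is to split the claim into three parts: (i) the displayed identity is essentially a restatement of the defining formula for $\|{\bf t}\|$; (ii) the quantity $\|{\bf t}\|$ is an integer; and (iii) it is strictly positive. For (i), I would use the observation just before the lemma that $\lambda_r\in\Psi_{\min}$ forces $t_r=0$ for every ${\bf t}\in T$. Writing ${\bf t}=\xi\bigl(t_1\lambda_1+\sum_{k\neq 1,r}t_k\lambda_k\bigr)$ and recalling that ${\bf e}(k)=\xi(d_k\lambda_1+\lambda_k)$, a coordinate comparison shows that both sides of the asserted identity equal $\xi\bigl((t_1+r\|{\bf t}\|)\lambda_1+\sum_{k\neq 1,r}t_k\lambda_k\bigr)$; thus the identity is equivalent to the defining equality for $\|{\bf t}\|$.

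For (ii), I would argue by modular arithmetic. Since $d_k+k\in\{r,2r\}$ by Lemma 4.5, we have $d_k\equiv -k\pmod{r}$ for every $k\neq 1,r$; hence $\sum_{k\neq 1,r}t_kd_k\equiv -\sum_{k\neq 1,r}kt_k\pmod{r}$. Combined with the type-$r$ condition $|{\bf t}|\equiv 0\pmod{r}$, which (using $t_r=0$) reads $t_1+\sum_{k\neq 1,r}kt_k\equiv 0\pmod{r}$, this yields $-t_1+\sum_{k\neq 1,r}t_kd_k\equiv 0\pmod{r}$, and so $\|{\bf t}\|\in\mz$.

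The hard part is (iii), the positivity of $\|{\bf t}\|$, which requires all three hypotheses (minimal, not single, not special) to be combined. Since ${\bf t}\neq{\bf 0}$ is not single and has $t_r=0$, there must exist some index $k_0\neq 1,r$ with $t_{k_0}\geq 1$. I would argue by contradiction: if $t_1\geq d_{k_0}$, then
${\bf t}-{\bf e}(k_0)=\xi\bigl((t_1-d_{k_0})\lambda_1+(t_{k_0}-1)\lambda_{k_0}+\sum_{j\neq 1,k_0}t_j\lambda_j\bigr)$
has all nonnegative coordinates and is of type $r$; it is nonzero because ${\bf t}\neq{\bf e}(k_0)$, as ${\bf t}$ is not special. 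Then ${\bf t}={\bf e}(k_0)+({\bf t}-{\bf e}(k_0))$ would decompose ${\bf t}$ into two nonzero $n$-sequences of type $r$, contradicting minimality. Hence $t_1<d_{k_0}\leq t_{k_0}d_{k_0}\leq\sum_{k\neq 1,r}t_kd_k$, yielding $\|{\bf t}\|>0$. The main obstacle is this last step, since one must verify that ${\bf t}-{\bf e}(k_0)$ lies in the correct cone (nonnegative coordinates) and is nonzero; the hypotheses ``not single'' supplies the existence of $k_0$, ``not special'' ensures ${\bf t}-{\bf e}(k_0)\neq {\bf 0}$, and ``minimal'' delivers the final contradiction.
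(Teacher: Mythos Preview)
Your proposal is correct and follows essentially the same route as the paper's proof: the identity is a coordinate computation using $t_r=0$, integrality comes from the congruence $d_k\equiv -k\pmod r$ (the paper phrases this via $(-t_1+\sum t_kd_k)\lambda_1\in Q/2$, which by Lemma~4.3 is the same thing), and positivity is obtained by the identical contradiction argument---decompose ${\bf t}={\bf e}(k_0)+({\bf t}-{\bf e}(k_0))$ for some $k_0\neq 1,r$ with $t_{k_0}\geq 1$, and use minimality and non-specialness. The only cosmetic difference is that the paper assumes $\|{\bf t}\|\leq 0$ and deduces $t_1\geq d_{k_0}$, whereas you assume $t_1\geq d_{k_0}$ directly and conclude $t_1<d_{k_0}\leq\sum_{k\neq 1,r}t_kd_k$ afterwards; the logic is the same.
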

\begin{proof}
Note  that  \beqs\sum_{k\not=1,r}t_k{\bf e}(k)={\bf t}+\xi(-t_1\lambda_1+\sum_{k\not=1,r}t_kd_k\lambda_1).\eeqs We have $-t_1\lambda_1+\sum_{k\not=1,r}t_kd_k\lambda_1\in\frac{Q}2\cap\Lambda$ and hence $r|(-t_1+\sum_{k\not=1,r}t_kd_k)$.

Since ${\bf t}\in T$, we may assume that $k_0\not=1,r$ such that $t_{k_0}\geq 1$. If $-t_1+\sum_{k\not=1,r}t_kd_k\leq0$, then
\beqs \xi^{-1}({\bf t})=d_{k_0}\lambda_1+\lambda_{k_0}+(t_1-d_{k_0})\lambda_1+(t_{k_0}-1)\lambda_{k_0}+\sum_{k\not=1,r,k_0}t_k\lambda_k.\eeqs
Because ${\bf t}$ is minimal, we have that $\xi^{-1}({\bf t})\in\Psi_{\min}$. Moreover, the facts $d_{k_0}\lambda_1+\lambda_{k_0}\in\Lambda^+\setminus\{0\}$ and $t_1-d_{k_0}\geq t_1-\sum_{k\not=1,r}t_kd_k\geq0$ yield that
\beqs (t_1-d_{k_0})\lambda_1+(t_{k_0}-1)\lambda_{k_0}+\sum_{k\not=1,r,k_0}t_k\lambda_k=0. \eeqs
Hence, ${\bf t}=\xi(d_{k_0}\lambda_1+\lambda_{k_0})$ is special, contradicting to ${\bf t}\in T$. Thus, the lemma holds.\end{proof}

\begin{theo}
Let $\sg$ be of type $A_n$ with $n\geq 2$. Then the center $Z(U_q(\sg))$ is isomorphic to the commutative algebra generated by elements $x_i, y_k (1\leq i,k\leq n, k\not=1,r)$ and $w_{\bf t} ({\bf t}\in T)$ with relations \beqs y_k^{r_k}=x_1^{\frac{d_k}{(r,k)}}x_k,\;x_1^{\|{\bf t}\|}w_{\bf t}=\prod_{k\not=1,r}y_{k}^{t_k}.\eeqs %where ${\bf t}=(t_1,\cdots,t_n)$,  $d_k$ is defined in Lemma 5.2 and $\|{\bf t}\|$ is defined in Lemma 5.3. %where  $d_{\bf t}={\rm lcm}(\frac{r_i}{\gcd(r_i,t_i)}|1\leq i\leq n)$ for all ${\bf t}=(t_1,\cdots,t_n)\in T$.
\end{theo}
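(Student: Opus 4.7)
The strategy is to use Theorem 3.7 to identify $Z(U_q(\sg))$ with the monoid algebra $\mc(q)[\Psi]$, and then to present that monoid algebra via the generators and relations given in the statement. By Theorem 4.4 together with Lemmas 4.5--4.6 and the remarks preceding Lemma 4.6, the minimal generating set $\Psi_{\min}$ of the commutative monoid $\Psi$ partitions into three families: the single elements $r_k\lambda_k$ for $1\le k\le n$ (which will correspond to $x_k$), the special elements $d_k\lambda_1+\lambda_k$ for $k\ne 1,r$ (corresponding to $y_k$), and the remaining minimal sequences $\xi^{-1}({\bf t})$ for ${\bf t}\in T$ (corresponding to $w_{\bf t}$). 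I would then define an algebra homomorphism $\phi:B\to\mc(q)[\Psi]$, from the abstract algebra $B$ presented in the statement, by sending $x_k\mapsto z_{r_k\lambda_k}$, $y_k\mapsto z_{d_k\lambda_1+\lambda_k}$, $w_{\bf t}\mapsto z_{\xi^{-1}({\bf t})}$. The two defining relations of $B$ translate under $\phi$ into genuine identities in the additive monoid $\Psi$: the first into $r_k(d_k\lambda_1+\lambda_k)=\frac{d_k}{(r,k)}(r\lambda_1)+r_k\lambda_k$, and the second into the content of Lemma 4.6. Surjectivity of $\phi$ is immediate from Lemma 3.6, since the images of the generators exhaust $\Psi_{\min}$, which generates $\Psi$ as a monoid.

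The main task is then injectivity. My plan is to exhibit a normal form in $B$ and match it bijectively with the monoid basis $\{z_\lambda\}_{\lambda\in\Psi}$ of $\mc(q)[\Psi]$. Using $y_k^{r_k}=x_1^{d_k/(r,k)}x_k$, one can rewrite every occurrence of $y_k$ so that its exponent lies in $[0,r_k)$, absorbing the excess into the $x_i$'s; using $x_1^{\|{\bf t}\|}w_{\bf t}=\prod_{k\ne 1,r}y_k^{t_k}$, one can either trade an occurrence of $w_{\bf t}$ for a product of $y_k$'s (at the cost of a negative power of $x_1$), or, conversely, eliminate a product of $y_k$'s in favour of $w_{\bf t}$ whenever the combined exponent tuple matches an element of $T$. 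The resulting reduction procedure should terminate because each application strictly decreases a suitable lexicographic invariant of the exponent tuple, and I would verify that distinct normal monomials map under $\phi$ to distinct basis elements $z_\lambda$ by computing the weight attached to each normal monomial and showing that the exponents are uniquely determined by that weight.

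The hardest step is confluence of the rewriting, equivalently the claim that the two listed families of relations generate all relations among the chosen generators of the commutative monoid $\Psi$. I expect to handle this by induction on the total weight $\lambda$: given two expressions of $\lambda$ as $\sum c_\mu\mu$ with $\mu\in\Psi_{\min}$, I show they can be converted into one another by iterated applications of the stated relations, the induction being driven by the height function $\|{\bf t}\|$ and the exponents of the $y_k$. An alternative and possibly cleaner route is to pass to the localization $B[x_1^{-1}]$ and exhibit an explicit inverse to the corresponding map $\phi_{x_1}$ via the formal identity $z_k=y_k z_1^{-d_k}$ for $k\ne 1,r$, meaningful inside $\mc(q)[\Psi][x_1^{-1}]\subset\mc(q)[\Lambda]$; injectivity of $\phi$ itself would then follow by showing that $B$ is $x_1$-torsion-free, a consequence of the normal form established in the previous step.
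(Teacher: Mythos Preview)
Your setup---defining the homomorphism into $\mathbb{C}(q)[\Psi]$, checking the two relation families via Lemmas 4.5--4.6, and reading off surjectivity from $\Psi_{\min}$---matches the paper exactly. Where you diverge is injectivity. The paper avoids normal forms and explicit inverses altogether: it localizes the presented algebra (called $\mathcal{A}$ there, your $B$) at the full set $S=\mathbb{C}(q)[x_1,\ldots,x_n]\setminus\{0\}$ and argues that $\mathcal{B}=S^{-1}\mathcal{A}$ is a \emph{field}. Over the fraction field $\mathbb{F}=\mathbb{C}(q)(x_1,\ldots,x_n)$ the polynomial $t^{r_k}-x_1^{d_k/(r,k)}x_k$ is irreducible, so adjoining $y_2,y_3,\ldots$ successively produces a genuine tower of field extensions; every $w_{\bf t}$ then already lies in the top field by the second relation, since $w_{\bf t}=x_1^{-\|{\bf t}\|}\prod_k y_k^{t_k}$. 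A nonzero ring homomorphism out of a field is injective, so the extended map on $\mathcal{B}$ is an isomorphism with no rewriting, no confluence check, and no explicit inverse required. This is considerably shorter than the normal-form route, whose confluence step you correctly flag as the hard part and only sketch.

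Your caveat is on point, though: to descend from injectivity on $\mathcal{B}$ to injectivity on $\mathcal{A}$ one needs the localization map $\mathcal{A}\to\mathcal{B}$ to be injective, i.e.\ $\mathcal{A}$ to be $S$-torsion-free. The paper passes over this step silently; your normal-form computation (or a simpler grading argument on $\mathcal{A}$) is exactly what would close that gap.
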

\begin{proof}
Let $\ca$ be the $\mc(q)$-algebra generated by elements $x_i, y_k (1\leq i,k\leq n, k\not=1,r)$ and $w_{\bf t} ({\bf t}\in T)$ subject to the relations:
\beqs y_k^{r_k}=x_1^\frac{d_k}{(r,k)}x_k,\;x_1^{\|{\bf t}\|}w_{\bf t}=\prod_{k\not=1,r}y_{k}^{t_k}.\eeqs
%$x_1,\cdots,x_n$ and $y_{\bf t}$(${\bf t}\in T$) with subject to relations
%\beqs y_{\bf t}^{d_{\bf t}}-\prod_{i=1}^nx_i^{\frac{d_{\bf t}t_i}{r_i}}.\eeqs

Define an algebra homomorphism $\zeta:\ca\rightarrow\mc(q)[\Psi]$ via
\beqs \zeta(x_i)=r_i\lambda_i,&\zeta(y_{k})=d_k\lambda_1+\lambda_k,&\zeta(w_{\bf t})=\xi^{-1}({\bf t})\eeqs
for $1\leq i,k\leq n\,  (k\not=1,r)$ and all non-single non-special minimal ${\bf t}$.

By Lemma 4.5,  we have \beqs\zeta(y_k^{r_k})=r_k(d_k\lambda_1+\lambda_k)=\frac{d_k}{(r,k)}(r\lambda_1)+r_kx_k=\zeta(x_k^\frac{d_k}{(r,k)}x_k).\eeqs
It follows from Lemma 4.6 that
\beqs\zeta(\prod_{k\not=1,r}y_{k}^{t_k})=\sum_{k\not=1,r}t_k{\bf e}(k)=(\sum_{k\not=1,r}t_kd_k\lambda_1)+\sum_{k\not=1,r}t_k\lambda_k=\|{\bf t}\|(r\lambda_1)+\xi^{-1}({\bf t})=\zeta(x_1^{\|{\bf t}\|}w_{\bf t}).\eeqs
Thus,  $\zeta$ preserves the generating relations of $\ca$ and it is well-defined. Moreover,
by the definition of $\Psi_{\min}$, the homomorphism $\zeta$ is an epimorphism.

In order to show that $\zeta$ is  injective, we let $\cb=S^{-1}\ca$ be the localization of $\ca$ at $S=\mc(q)[x_1,\cdots,x_n]\setminus\{0\}$. Then $\zeta$ can be extended to an epimorphism (also denoted by $\zeta$) from $\cb$ to the localization $\zeta(S)^{-1}\mc(q)[\Psi]$ by
\beqs \zeta(s^{-1}a)&=&\zeta(s)^{-1}\zeta(a)\eeqs
for all $s\in S$ and $a\in \ca$. Now it suffices to prove that $\cb$ is a field.

We only prove it for $r=n+1$. The proof for $r=\frac{n+1}{2}$ is  similar.

Let $\mf=\mc(q)(x_1,\cdots,x_n)$ be the fraction field.
Then $\mf_1:=\mf[y_2]$ is a field since $t^{r_2}-x_1^{d_2}x_2$ is irreducible in $\mf[t]$, and $\mf_2:=\mf_1[y_2]=\mf[y_2, y_3]$ is also a field since
$t^{r_3}-x_1^{d_3}x_3$ is irreducible in $\mf_1[t]$. Repeating the same process, we obtain that  $\mf_{n-1}:=\mf[y_2,\cdots, y_{n}]$ is field.

Now for all ordinary ${\bf t}$, we have $w_{\bf t}=x_1^{-\|{\bf t}\|}\prod_{k=2}^ny_{k}^{t_k}\in\mf_{n-1}$. So
$\cb=\mf_{n-1}$ is a field.
\end{proof}

In the end of this paper, we present 3 examples in the cases of  types $A_2, A_3$ and $A_4$ respectively. In these cases, we list all minimal $n$-sequences  of type $r$, a minimal generating set of $\Psi$ corresponding to a minimal generating set of the center $Z(U_q(\sg))$, and the algebraic structure of $Z(U_q(\sg))$.  The center $Z(U_q(\sg))$ for types $A_2$ and $A_3$ have been proved in \cite{LWP} and \cite{WWL} respectively. %We also give a brief proof of Theorem 1.4 (ii) by using characteristics for these two cases.
For the type $A_4$,   it turns out that  the center $Z(U_q(\sg))$ is isomorphic to a quotient algebra of polynomial algebra in fourteen variables with ten relations.

\begin{exam}{\rm
Type $A_2$ case.

(1) The minimal generating set of $\Psi$:
$$\Psi_{\min}=\{3\lambda_1, 3\lambda_2, \lambda_1+\lambda_2\}.$$

(2) The minimal $2$-sequences of type $3$:
\beqs single: (3,0), (0,3);\;\quad special: (1,1);\;\quad T=\emptyset.\eeqs

(3)  The center $Z(U_q(\sg)):$ isomorphic to
$$\mc(q)[x_1,x_2,y_2]/(x_1x_2-y_2^3).$$

%(4)  Now we check the result by using characteristics. In fact, we have
%\beqs L(\lambda_1)\otimes L(\lambda_2)&\cong&L(\lambda_1+\lambda_2)\oplus L(0),\\
%L(\lambda_1)^{\otimes3}&\cong&L(3\lambda_1)\oplus 2L(\lambda_2+\lambda_1)\oplus L(0),\\
%L(\lambda_2)^{\otimes3}&\cong&L(3\lambda_2)\oplus 2L(\lambda_2+\lambda_1)\oplus L(0),
%\eeqs
%Let $y_1=\chi(L(3\lambda_1)), y_2=\chi(L(3\lambda_2))$, $y_3=\chi(L(\lambda_1+\lambda_2))$ and $z_1=\chi(L(\lambda_1)), z_2=\chi(L(\lambda_2))$, then
%\beqs x_1=z_1^3=y_1+2y_4+1,\;x_2=z_2^3=y_2+2y_4+1,\;x_3=z_1z_2=y_3+1.\eeqs
%Clearly, the map $x_i\mapsto t_i$ defines an isomorphism from $Im(\theta)$ to $\mc(q)[t_1,t_2,t_3]/<t_1t_2-t_3^3>$.
}
\end{exam}
\begin{exam}
{\rm
Type $A_3$ case .

(1) The minimal generating set of $\Psi$:
$$\Psi_{\min}=\{2\lambda_1, \lambda_2, 2\lambda_3, \lambda_1+\lambda_3\}.$$

(2) The minimal $3$-sequences of type $2$: \beqs single: (2,0,0), (0,1,0), (0,0,2);\quad special: (1,0,1); \quad T=\emptyset.\eeqs

(3) The center $Z(U_q(\sg))$: isomorphic to
$$\mc(q)[x_1,x_2,x_3,y_3]/(x_1x_3-y_3^2).$$

%(4) Checking by using characteristics:
%By computing, we have
%\beqs L(\lambda_1)\otimes L(\lambda_3)&\cong&L(\lambda_1+\lambda_3)\oplus L(0),\\
%L(\lambda_1)^{\otimes2}&\cong&L(2\lambda_1)\oplus L(\lambda_2),\\
%L(\lambda_3)^{\otimes2}&\cong&L(2\lambda_3)\oplus L(\lambda_2),
%\eeqs
%Let $y_1=\chi(L(2\lambda_1)), y_2=\chi(L(2\lambda_3))$, $y_3=\chi(L(\lambda_1+\lambda_3))$, $y_4=\chi(L(\lambda_2))$ and $z_1=\chi(L(\lambda_1)), z_2=\chi(L(\lambda_2)), z_3=\chi(L(\lambda_3))$.  Then
%\beqs x_1=z_1^2=y_1+y_4,\;x_2=z_3^2=y_2+y_4,\;x_3=z_1z_3=y_3+1, x_4=z_2=y_4.\eeqs
%Thus,  the map $$x_1\mapsto t_1, x_2\mapsto t_3, x_3\mapsto t_4, x_4\mapsto t_2$$ defines an isomorphism from $Im(\theta)$ to $\mc(q)[t_1,t_2,t_3, t_4]/<t_1t_3-t_4^2>$.
}
\end{exam}
\begin{exam}{\rm
 Type $A_4$ case.

(1) The minimal generating set  of $\Psi$:
\beqs\Psi_{\min}&=&\{5\lambda_1, 5\lambda_2, 5\lambda_3, 5\lambda_4,  3\lambda_1+\lambda_2, 2\lambda_1+\lambda_3, \lambda_1+\lambda_4,\\
 && \lambda_2+\lambda_3, \lambda_1+2\lambda_2, \lambda_1+3\lambda_3, \lambda_2+2\lambda_4, 3\lambda_2+\lambda_4, 2\lambda_3+\lambda_4, \lambda_3+3\lambda_4\}.\eeqs

(2) The minimal $4$-sequences of type $5$:
\beqs &&single: (5,0,0,0), (0,5,0,0), (0,0,5,0), (0,0,0,5);\\
&&special:  (3,1,0,0),  (2,0,1,0), (1,0,0,1);\\
&& T=\{(0,1,1,0), (1,2,0,0), (1,0,3,0), (0,1,0,2), (0,3,0,1),(0,0,2,1), (0,0,1,3)\}.\eeqs

(3) The center $Z(U_q(\sg))$: isomorphic to
$$\mc(q)[x_1,x_2,x_3,x_4,y_2,y_3,y_4,w_1,w_2,w_3,w_4,w_5,w_6,w_7]/I,$$
where $I$ is the ideal generated by
\beqs &&x_1^3x_2-y_2^5,\, x_1^2x_3-y_3^5,\, x_1x_4-y_4^5,\, x_1w_1-y_2y_3,\, x_1w_2-y_2^2,\\
&& x_1w_3-y_{3}^3,\, x_1w_4-y_2y_4^2,\, x_1^2w_5-y_2^3y_4,\, x_1w_{6}-y_3^2y_4,\, x_1w_7-y_3y_4^3.\eeqs

%(4) The algebra $Im(\theta)$ has a minimal generating set
%\beqs\{z_1^5, z_2^5, z_3^5, z_4^5, z_1z_4, z_2z_3, z_1z_2^2, z_1z_3^3, z_1^3z_2, z_2^3z_4, z_3z_4^3, z_3^2z_4, z_1^2z_3, z_2z_4^2\}.\eeqs
%Then the isomorphism also can be proved similar to $E_6$ case.
}\end{exam}

\newpage
\section*{Appendix: Fundamental weights and prime roots}

For convenience, we list all the fundamental weights as linear combinations of prime roots in Table 1, and list all the prime roots as linear combinations of fundamental weights in Table 2.
These relations are well known in the representation theory of Lie algebras. For example, one can find the following Table 2  in \cite{Hum}.

$$\begin{tabular}{c}
{\bf Table \quad 1}\\
\begin{tabular}{|c|l|}
\hline Type of $\sg$& \qquad\qquad\qquad Fundamental  weights presented by prime roots\\
\hline $A_n$&$\lambda_i=\frac{n+1-i}{n+1}\Big(\alpha_1+2\alpha_2\cdots+(i-1)\alpha_{i-1}\Big)+\frac{(n+1-i)i}{n+1}\alpha_i$\\
&\qquad$+\frac{i}{n+1}\Big(\alpha_n+2\alpha_{n-1}+\cdots+(n-i)\alpha_{i+1}\Big)$, $1\leq i\leq n$\\
\hline $B_n$&$\lambda_i=\alpha_1+2\alpha_2+\cdots+i(\alpha_i+\alpha_{i+1}+\cdots+\alpha_n)$, $1\leq i\leq n-1$\\
&$\lambda_n=\frac{1}2(\alpha_1+2\alpha_2+\cdots+i\alpha_i+\cdots+n\alpha_n)$\\
\hline $C_n$&$\lambda_i=\alpha_1+2\alpha_2+\cdots+i(\alpha_i+\alpha_{i+1}+\cdots+\alpha_{n-1}+\frac12\alpha_n)$, $1\leq i\leq n-1$\\ &$\lambda_n=\alpha_1+2\alpha_2+\cdots+(n-1)\alpha_{n-1}+\frac{n}2\alpha_n$\\
\hline $D_n$&$\lambda_i=\alpha_1+2\alpha_2+\cdots+i(\alpha_i+\alpha_{i+1}+\cdots+\alpha_{n-2})+\frac{i}2(\alpha_{n-1}+\alpha_n)$, $1\leq i\leq n-2$\\
&$\lambda_{n-1}=\frac12\Big(\alpha_1+2\alpha_2+\cdots+(n-2)\alpha_{n-2}+\frac{n}2\alpha_{n-1}+\frac{n-2}2\alpha_n\Big)$\\
&$\lambda_n=\frac12\Big(\alpha_1+2\alpha_2+\cdots+(n-2)\alpha_{n-2}+\frac{n-2}2\alpha_{n-1}+\frac{n}2\alpha_n\Big)$\\
\hline $E_6$&$\lambda_1=\frac13(4\alpha_1+3\alpha_2+5\alpha_3+6\alpha_4+4\alpha_5+2\alpha_6)$,\\
& $\lambda_2=\alpha_1+2\alpha_2+2\alpha_3+3\alpha_4+2\alpha_5+\alpha_6$\\
&$\lambda_3=\frac13(5\alpha_1+6\alpha_2+10\alpha_3+12\alpha_4+8\alpha_5+4\alpha_6)$,\\
& $\lambda_4=2\alpha_1+3\alpha_2+4\alpha_3+6\alpha_4+4\alpha_5+2\alpha_6$\\
&$\lambda_5=\frac13(4\alpha_1+6\alpha_2+8\alpha_3+12\alpha_4+10\alpha_5+5\alpha_6)$, \\
&$\lambda_6=\frac13(2\alpha_1+3\alpha_2+4\alpha_3+6\alpha_4+5\alpha_5+4\alpha_6)$\\
\hline $E_7$&$\lambda_1=2\alpha_1+2\alpha_2+3\alpha_3+4\alpha_4+3\alpha_5+2\alpha_6+\alpha_7$\\
& $\lambda_2=\frac12(4\alpha_1+7\alpha_2+8\alpha_3+12\alpha_4+9\alpha_5+6\alpha_6+3\alpha_7)$\\
&$\lambda_3=3\alpha_1+4\alpha_2+6\alpha_3+8\alpha_4+6\alpha_5+4\alpha_6+2\alpha_7$,\\
& $\lambda_4=4\alpha_1+6\alpha_2+8\alpha_3+12\alpha_4+9\alpha_5+6\alpha_6+3\alpha_7$\\
&$\lambda_5=\frac12(6\alpha_1+9\alpha_2+12\alpha_3+18\alpha_4+15\alpha_5+10\alpha_6+5\alpha_7)$,\\
& $\lambda_6=2\alpha_1+3\alpha_2+4\alpha_3+6\alpha_4+5\alpha_5+4\alpha_6+2\alpha_7$\\
& $\lambda_7=\frac12(2\alpha_1+3\alpha_2+4\alpha_3+6\alpha_4+5\alpha_5+4\alpha_6+3\alpha_7)$\\
\hline $E_8$&$\lambda_1=4\alpha_1+5\alpha_2+7\alpha_3+10\alpha_4+8\alpha_5+6\alpha_6+4\alpha_7+2\alpha_8$\\
&$\lambda_2=5\alpha_1+8\alpha_2+10\alpha_3+15\alpha_4+12\alpha_5+9\alpha_6+6\alpha_7+3\alpha_8$\\
&$\lambda_3=7\alpha_1+10\alpha_2+14\alpha_3+20\alpha_4+16\alpha_5+12\alpha_6+8\alpha_7+4\alpha_8$\\
&$\lambda_4=10\alpha_1+15\alpha_2+20\alpha_3+30\alpha_4+24\alpha_5+18\alpha_6+12\alpha_7+6\alpha_8$\\
&$\lambda_5=8\alpha_1+12\alpha_2+16\alpha_3+24\alpha_4+20\alpha_5+15\alpha_6+10\alpha_7+5\alpha_8$\\
&$\lambda_6=6\alpha_1+9\alpha_2+12\alpha_3+18\alpha_4+15\alpha_5+12\alpha_6+8\alpha_7+4\alpha_8$\\
&$\lambda_7=4\alpha_1+6\alpha_2+8\alpha_3+12\alpha_4+10\alpha_5+8\alpha_6+6\alpha_7+3\alpha_8$\\
&$\lambda_8=2\alpha_1+3\alpha_2+4\alpha_3+6\alpha_4+5\alpha_5+4\alpha_6+3\alpha_7+2\alpha_8$\\
\hline $F_4$&$\lambda_1=2\alpha_1+3\alpha_2+4\alpha_3+2\alpha_4$, $\lambda_2=3\alpha_1+6\alpha_2+8\alpha_3+4\alpha_4$\\
&$\lambda_3=2\alpha_1+4\alpha_2+6\alpha_3+3\alpha_4$, $\lambda_4=\alpha_1+2\alpha_2+3\alpha_3+2\alpha_4$\\
\hline $G_2$&$\lambda_1=2\alpha_1+3\alpha_2$, $\lambda_2=\alpha_1+2\alpha_2$\\
\hline\end{tabular}\end{tabular}$$

$${\small\begin{tabular}{c}
{\bf Table \quad 2}\\
\begin{tabular}{|c|l|}
\hline Type of $\sg$& \qquad\qquad\qquad Prime roots presented by fundamental  weights\\
\hline $A_1$&$\alpha_1=2\lambda_1$\\
\hline $A_n(n>1)$&$\alpha_1=2\lambda_1-\lambda_2, \alpha_n=-\lambda_{n-1}+2\lambda_n$,\\
&$ \alpha_i=-\lambda_{i-1}+2\lambda_i-\lambda_{i+1}, 2\leq i\leq n-1$\\
\hline $B_2$&$\alpha_1=2\lambda_1-2\lambda_2, \alpha_2=-\lambda_1+2\lambda_2$\\
\hline $B_n(n>2)$&$\alpha_1=2\lambda_1-\lambda_2, \alpha_{n-1}=-\lambda_{n-2}+2\lambda_{n-1}-2\lambda_n, \alpha_n=-\lambda_{n-1}+2\lambda_n$,\\
& $\alpha_i=-\lambda_{i-1}+2\lambda_i-\lambda_{i+1}, 2\leq i\leq n-2$\\
\hline $C_n$&$\alpha_1=2\lambda_1-\lambda_2, \alpha_n=-2\lambda_{n-1}+2\lambda_n$,\\
& $\alpha_i=-\lambda_{i-1}+2\lambda_i-\lambda_{i+1}, 2\leq i\leq n-1$\\
\hline $D_n$&$\alpha_1=2\lambda_1-\lambda_2, \alpha_i=-\lambda_{i-1}+2\lambda_i-\lambda_{i+1}$, $2\leq i\leq n-3$\\
&$\alpha_{n-2}=-\lambda_{n-3}+2\lambda_{n-2}-\lambda_{n-1}-\lambda_n$, \\
&$\alpha_{n-1}=-\lambda_{n-2}+2\lambda_{n-1}, \alpha_{n}=-\lambda_{n-2}+2\lambda_{n}$\\
\hline $E_6$&$\alpha_1=2\lambda_1-\lambda_3, \alpha_2=2\lambda_2-\lambda_4, \alpha_3=-\lambda_1+2\lambda_3-\lambda_4$,\\
 &$\alpha_4=-\lambda_2-\lambda_3+2\lambda_4-\lambda_5, \alpha_5=-\lambda_4+2\lambda_5-\lambda_6, \alpha_6=-\lambda_5+2\lambda_6$ \\
\hline $E_7$&$\alpha_1=2\lambda_1-\lambda_3, \alpha_2=2\lambda_2-\lambda_4, \alpha_3=-\lambda_1+2\lambda_3-\lambda_4$,\\
 &$\alpha_4=-\lambda_2-\lambda_3+2\lambda_4-\lambda_5, \alpha_5=-\lambda_4+2\lambda_5-\lambda_6$,\\
 &$\alpha_6=-\lambda_5+2\lambda_6-\lambda_7$, $\alpha_7=-\lambda_6+2\lambda_7$ \\
\hline $E_8$&$\alpha_1=2\lambda_1-\lambda_3, \alpha_2=2\lambda_2-\lambda_4, \alpha_3=-\lambda_1+2\lambda_3-\lambda_4$,\\
 &$\alpha_4=-\lambda_2-\lambda_3+2\lambda_4-\lambda_5, \alpha_5=-\lambda_4+2\lambda_5-\lambda_6$,\\
 &$\alpha_6=-\lambda_5+2\lambda_6-\lambda_7$, $\alpha_7=-\lambda_6+2\lambda_7-\lambda_8$, $\alpha_8=-\lambda_7+2\lambda_8$ \\
\hline $F_4$&$\alpha_1=2\lambda_1-\lambda_2, \alpha_2=-\lambda_1+2\lambda_2-2\lambda_3$, $\alpha_3=-\lambda_2+2\lambda_3-\lambda_4, \alpha_4=-\lambda_3+2\lambda_4$\\
\hline $G_2$&$\alpha_1=2\lambda_1-3\lambda_2$, $\alpha_2=-\lambda_1+2\lambda_2$\\
\hline\end{tabular}\end{tabular}}$$

\vskip30pt
\def\refname{\centerline{\bf REFERENCES}}

\end{document}